\theoremstyle{proclaim}
\newtheorem{question}[]{Question}
\newtheorem{theorem}{Theorem}[section]
\newtheorem{lemma}[theorem]{Lemma}
\newtheorem{corollary}[theorem]{Corollary}
\newtheorem{proposition}[theorem]{Proposition}
\theoremstyle{fancyproclaim}
\theoremstyle{statement}
\newtheorem{cor}[theorem]{Corollary}
\newtheorem{remark}[theorem]{Remark}
\newtheorem{example}[theorem]{Example}
\theoremstyle{fancystatement}
\numberwithin{equation}{section}
\providecommand{\AMS}{$\mathcal{A}$\kern-.1667em%
	\lower.25em\hbox{$\mathcal{M}$}\kern-.125em$\mathcal{S}$}
\begin{document}
	\title[Characterization $k-$smoothness of  operators ]{Characterization of $k-$smoothness of  operators defined between infinite-dimensional spaces}
\author[Arpita Mal, Subhrajit Dey and Kallol Paul]{Arpita Mal, Subhrajit Dey and Kallol Paul}

\newcommand{\acr}{\newline\indent}

	\address[Mal]{Department of Mathematics\\ Jadavpur University\\ Kolkata 700032\\ West Bengal\\ INDIA}
	\email{arpitamalju@gmail.com}

		\address[Dey]{Department of Mathematics\\ Muralidhar Girls' College\\ Kolkata 700029\\ West Bengal\\ INDIA}
	\email{subhrajitdeyjumath@gmail.com}

		\address[Paul]{Department of Mathematics\\ Jadavpur University\\ Kolkata 700032\\ West Bengal\\ INDIA}
	\email{kalloldada@gmail.com}

	\thanks{The research of  Arpita Mal is supported by UGC, Govt. of India.  The research of Prof. Paul  is supported by project MATRICS(MTR/2017/000059)  of DST, Govt. of India. } 
	
	\subjclass[2010]{Primary 46B20, Secondary 47L05}
	\keywords{$k-$smoothness; M-ideal; extreme contraction; linear operator; Hilbert space; Banach space}

\maketitle
\begin{abstract}
 We characterize $k-$smoothness of bounded linear operators defined between infinite-dimensional  Hilbert spaces. We  study the problem in the setting of both finite and infinite-dimensional Banach spaces.  We also  characterize $k-$smoothness of operators on some particular spaces, namely $\mathbb{L}(\mathbb{X},\ell_{\infty}^n),~\mathbb{L}(\ell_{\infty}^3,\mathbb{Y}),$ where $\mathbb{X}$ is a finite-dimensional Banach space and $\mathbb{Y}$ is a two-dimensional Banach space. As an application, we characterize extreme contractions on $\mathbb{L}(\ell_{\infty}^3,\mathbb{Y}),$ where $\mathbb{Y}$ is a two-dimensional polygonal Banach space.
\end{abstract}

\section{Introduction}

The problem of characterizing $k-$smooth operators defined between arbitrary Banach or Hilbert spaces is relatively  new but an important area of  research  in the field of geometry of Banach spaces. There are several papers including \cite{H,Ha,KS,LR,MP,Wa} that contain the study of $k-$smoothness of operators on different spaces.   In this paper, our objective is to study the $k-$smoothness of  bounded linear operators defined between infinite-dimensional   spaces. We first fix the notations and terminologies to be used throughout the paper.

Let $\mathbb{X},$ $\mathbb{Y}$ denote Banach spaces and $\mathbb{H}$ denote Hilbert space. Throughout the paper we assume that the  spaces are real unless otherwise mentioned. The unit ball and the unit sphere of $\mathbb{X}$ are denoted by $B_\mathbb{X}$ and $S_\mathbb{X}$ respectively, i.e., $B_{\mathbb{X}}=\{x\in \mathbb{X}:\|x\|\leq 1\},S_\mathbb{X}=\{x\in \mathbb{X}:\|x\|= 1\}.$ The space of bounded (compact) linear operators between  $\mathbb{X}$ and  $\mathbb{Y}$ is denoted by  $\mathbb{L}(\mathbb{X},\mathbb{Y})~(\mathbb{K}(\mathbb{X},\mathbb{Y})).$ If $\mathbb{X}=\mathbb{Y},$ then we write  $\mathbb{L}(\mathbb{X},\mathbb{Y}):=\mathbb{L}(\mathbb{X})$ and $\mathbb{K}(\mathbb{X},\mathbb{Y}):=\mathbb{K}(\mathbb{X}).$ $\mathbb{X}^*$ denote the dual space of $\mathbb{X}.$ An element $x \in S_{\mathbb{X}}$ is said to be an extreme point of the convex set $ B_{\mathbb{X}}$ if and only if $ x = (1-t)y + tz$ for some $ y,z \in B_{\mathbb{X}}$ and $ t \in (0,1) $ implies that $ y =z=x.$ The set of all extreme points of  $ B_{\mathbb{X}}$ is denoted by $Ext(B_\mathbb{X}).$   For $x,y\in \mathbb{X},$ let $L[x,y]=\{tx+(1-t)y:0\leq t\leq 1\}$ and $L(x,y)=\{tx+(1-t)y:0< t< 1\}.$ A Banach space $\mathbb{X}$ is said to be a strictly convex Banach space if every element of the unit sphere $ S_{\mathbb{X}}$ is an extreme point of  the unit ball $ B_{\mathbb{X}},$ equivalently, $\mathbb{X}$ is said to be a strictly convex Banach space, if the unit sphere of $\mathbb{X}$ does not contain non-trivial straight line segment.  A face $E$ of a convex set $C$ is said to be an edge if for each $z\in E,$ there exist extreme points  $x,y$ in $C$ such that $z\in L[x,y].$     An element $x^*\in S_{\mathbb{X}^*}$ is  said to be a supporting linear functional of $x\in S_{\mathbb{X}},$ if $x^*(x)=1.$ For a unit vector $x,$ let $J(x)$ denote the set of all supporting linear functionals of $x,$ i.e., $J(x)=\{x^*\in S_{\mathbb{X}^*}:x^*(x)=1\}.$ By the Hahn-Banach Theorem, $J(x)\neq \emptyset,$ for all $x\in S_\mathbb{X}.$ We would like to note that $J(x)$ is a weak*-compact convex subset of $S_{\mathbb{X}^*}.$ The set of all extreme points of $J(x)$ is denoted by $Ext~J(x),$ where $x\in S_\mathbb{X}.$ A unit vector $x$ is said to be a smooth point if $J(x)$ is singleton. $\mathbb{X}$ is said to be a smooth Banach space if every unit vector of $\mathbb{X}$ is smooth. 

In 2005, Khalil and Saleh \cite{KS} generalized the notion of smoothness and introduced the notion of multi-smoothness or $k-$smoothness depending on the ``size" of $J(x).$ An element $x\in S_{\mathbb{X}}$ is said to be $k-$smooth or the order of smoothness of $x$ is $k,$ if $J(x)$ contains exactly $k$ linearly independent supporting linear functionals of $x$. In other words, $x$ is $k-$smooth, if $\dim~span~J(x)=k.$ Moreover, from \cite[Prop. 2.1]{LR}, we get that $x$ is $k-$smooth, if $k=\dim~span~Ext~J(x).$   Similarly, $T\in \mathbb{L}(\mathbb{X},\mathbb{Y})$ is said to be $k-$smooth operator, if $k=\dim~span~J(T)=\dim~span~Ext~J(T).$ Observe that, $1-$smooth points of $S_\mathbb{X}$ are actually the smooth points of $S_\mathbb{X}.$ In our study, the norm attainment set of an operator plays an important role which will be clear in due time. The norm attainment set of $T,$ denoted as $ M_T$,  is defined as the collection of all unit vectors $x$ at which $T$ attains its norm, i.e., $M_T=\{x\in S_\mathbb{X}:\|Tx\|=\|T\|\}.$  The notion of $k-$smoothness  has a nice connection with extreme contraction which will be explored later. An operator $T\in \mathbb{L}(\mathbb{X},\mathbb{Y})$ is said to be an extreme contraction, if $T$ is an extreme point of the unit ball of $\mathbb{L}(\mathbb{X},\mathbb{Y}).$
A two-dimensional Banach space $\mathbb{X}$ is said to be a polygonal Banach space, if $B_\mathbb{X}$ contains only finitely many extreme points. Equivalently, a two-dimensional Banach space $\mathbb{X}$ is a polygonal Banach space, if $B_\mathbb{X}$ is a polygon.

From \cite[Th. 3.8]{LR}, we know that there is a large class of Banach spaces which does not contain $k-$smooth point, where $k\in \mathbb{N}.$ The papers  \cite{H,Ha,KS,LR,MP,Wa} contain extensive study on multi-smoothness in Banach space and in operator space. In \cite[Th. 2.4]{Wa} W\'ojcik studied $k-$smoothness of compact operators defined between complex (real) Hilbert spaces. In this paper, we obtain a complete characterization of $k-$smoothness of bounded linear operators defined between  complex (real) Hilbert spaces. We prove that a bounded linear operator $T$ defined on a complex (real)  Hilbert space $\mathbb{H}$ is  $n^2-$smooth (${{n+1}\choose {2}}-$smooth) if  and only if $ M_T = S_{H_0} ,$ where $ \dim(H_0)=n $ and $\|T\|_{H_0^\perp} < \|T\|.$ Moving onto Banach spaces, the complete characterization of $k-$smooth operators defined between arbitrary Banach spaces is still  not known, in fact it is  elusive even for finite-dimensional Banach spaces. In \cite{MP}, the authors characterized the $k-$smoothness of a bounded linear operator defined between two-dimensional Banach spaces. In this paper, we continue our study in this direction and obtain sufficient conditions for $k-$smoothness of bounded linear operators defined between infinite-dimensional Banach spaces. We also obtain a relation between the order of smoothness of the operators $T$ and $T^*,$ where $T$ is defined between finite-dimensional Banach spaces and $T^*$ is the adjoint of $T$. Using this relation, we characterize the order of smoothness of an operator defined from a finite-dimensional Banach space to $\ell_{\infty}^n,~(n\in \mathbb{N}).$    We also obtain a characterization of the order of smoothness of $T\in \mathbb{L}(\ell_\infty^3,\mathbb{Y}),$ where $\mathbb{Y}$ is a two-dimensional Banach space. As an application of this result, we characterize the extreme contractions on the space $\mathbb{L}(\ell_\infty^3,\mathbb{Y}),$ where $\mathbb{Y}$ is a two-dimensional polygonal Banach space.\\

We state the following lemma \cite[Lemma 3.1]{W}, characterizing $Ext~J(T),$ which will be used often. 
\begin{lemma}\cite[Lemma 3.1]{W}\label{lemma-wojcik}
	Suppose that $\mathbb{X}$ is a reflexive Banach space. Suppose that $\mathbb{K}(\mathbb{X},\mathbb{Y})$ is an $M-$ideal in $\mathbb{L}(\mathbb{X},\mathbb{Y}).$ Let $T\in \mathbb{L}(\mathbb{X},\mathbb{Y}), \|T\|=1$ and  dist$(T,\mathbb{K}(\mathbb{X},\mathbb{Y}))<1.$  Then $M_T\cap Ext(B_\mathbb{X})\neq \emptyset$ and 
\[Ext ~J(T)=\{y^*\otimes x\in \mathbb{K}(\mathbb{X},\mathbb{Y})^*:x\in M_T\cap Ext(B_{\mathbb{X}}), y^*\in Ext ~J(Tx)\},\]
where  $y^*\otimes x: \mathbb{K}(\mathbb{X},\mathbb{Y})\to \mathbb{R}$ is defined by $y^*\otimes x(S)=y^*(Sx)$ for every $S\in \mathbb{K}(\mathbb{X},\mathbb{Y}).$
\end{lemma}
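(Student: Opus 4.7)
The approach is to exploit the $M$-ideal hypothesis to transfer the problem from $\mathbb{L}(\mathbb{X},\mathbb{Y})^*$ down to $\mathbb{K}(\mathbb{X},\mathbb{Y})^*$, and then to apply the well-known description of the extreme points of the unit ball of $\mathbb{K}(\mathbb{X},\mathbb{Y})^*$ that is available when $\mathbb{X}$ is reflexive.

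\textbf{Step 1: Reduction.} Because $\mathbb{K}(\mathbb{X},\mathbb{Y})$ is an $M$-ideal in $\mathbb{L}(\mathbb{X},\mathbb{Y})$, the dual admits an $\ell_1$-decomposition $\mathbb{L}(\mathbb{X},\mathbb{Y})^*=F\oplus_1\mathbb{K}(\mathbb{X},\mathbb{Y})^\perp$, with $F$ isometric to $\mathbb{K}(\mathbb{X},\mathbb{Y})^*$ via restriction. I would pick any $\phi\in J(T)$ and decompose it as $\phi=\phi_1+\phi_2$ with $\phi_1\in F$ and $\phi_2\in\mathbb{K}(\mathbb{X},\mathbb{Y})^\perp$. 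Since $\phi_2$ annihilates $\mathbb{K}(\mathbb{X},\mathbb{Y})$, one has $|\phi_2(T)|=|\phi_2(T-K)|\leq\|\phi_2\|\,\mathrm{dist}(T,\mathbb{K}(\mathbb{X},\mathbb{Y}))$ for every $K\in\mathbb{K}(\mathbb{X},\mathbb{Y})$. Combined with $1=\phi(T)=\phi_1(T)+\phi_2(T)$, $|\phi_1(T)|\leq\|\phi_1\|$, $\|\phi\|=\|\phi_1\|+\|\phi_2\|=1$, and $\mathrm{dist}(T,\mathbb{K}(\mathbb{X},\mathbb{Y}))<1$, this forces $\phi_2=0$. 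Consequently, $J(T)\subseteq F$, and under the canonical identification, $J(T)$ becomes a nonempty weak$^*$-compact face of $B_{\mathbb{K}(\mathbb{X},\mathbb{Y})^*}$.

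\textbf{Step 2: Identifying $\mathrm{Ext}\,J(T)$.} I would invoke the classical description that, for reflexive $\mathbb{X}$, the extreme points of $B_{\mathbb{K}(\mathbb{X},\mathbb{Y})^*}$ are exactly the rank-one functionals $y^*\otimes x$ with $x\in\mathrm{Ext}(B_\mathbb{X})$ and $y^*\in\mathrm{Ext}(B_{\mathbb{Y}^*})$. Since $J(T)$ is a face of $B_{\mathbb{K}(\mathbb{X},\mathbb{Y})^*}$, Milman's theorem gives $\mathrm{Ext}\,J(T)=J(T)\cap\mathrm{Ext}(B_{\mathbb{K}(\mathbb{X},\mathbb{Y})^*})$. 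For $\phi=y^*\otimes x\in J(T)$, the chain $1=y^*(Tx)\leq\|Tx\|\leq 1$ forces $\|Tx\|=1$ (so $x\in M_T$) and $y^*(Tx)=\|Tx\|$ (so $y^*\in J(Tx)$); being an extreme point of $B_{\mathbb{Y}^*}$ that lies in the face $J(Tx)$, $y^*\in\mathrm{Ext}\,J(Tx)$. Conversely, for $x\in M_T\cap\mathrm{Ext}(B_\mathbb{X})$ and $y^*\in\mathrm{Ext}\,J(Tx)\subseteq\mathrm{Ext}(B_{\mathbb{Y}^*})$, the functional $y^*\otimes x$ lies in $\mathrm{Ext}(B_{\mathbb{K}(\mathbb{X},\mathbb{Y})^*})\cap J(T)=\mathrm{Ext}\,J(T)$. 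Nonemptiness of $M_T\cap\mathrm{Ext}(B_\mathbb{X})$ then follows from Krein--Milman applied to the nonempty weak$^*$-compact convex set $J(T)$, since every extreme point of $J(T)$ produces such an $x$.

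\textbf{Main obstacle.} The technical heart of the argument is Step 2: the description of $\mathrm{Ext}(B_{\mathbb{K}(\mathbb{X},\mathbb{Y})^*})$ as rank-one tensors of extreme points. The forward inclusion is the key geometric input, and reflexivity of $\mathbb{X}$ is used essentially in order to guarantee norm attainment for the rank-one representatives. Step 1, while conceptually transparent, also relies on extracting the correct $L$-projection from the $M$-ideal hypothesis, and the combinatorial bookkeeping between ``extreme in $B_{\mathbb{Y}^*}$'' and ``extreme in $J(Tx)$'' must be carried out with Milman's theorem at each stage.
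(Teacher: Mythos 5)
The paper offers no proof of this lemma---it is imported verbatim from W\'ojcik \cite[Lemma 3.1]{W}---and your sketch reproduces exactly the argument of that cited source: the $\ell_1$-decomposition of $\mathbb{L}(\mathbb{X},\mathbb{Y})^*$ coming from the $M$-ideal hypothesis, combined with $\mathrm{dist}(T,\mathbb{K}(\mathbb{X},\mathbb{Y}))<1$, forces $J(T)$ into the canonical copy of $\mathbb{K}(\mathbb{X},\mathbb{Y})^*$, after which the Ruess--Stegall description of $Ext(B_{\mathbb{K}(\mathbb{X},\mathbb{Y})^*})$ for reflexive $\mathbb{X}$ and the face identity $Ext\,J(T)=J(T)\cap Ext(B_{\mathbb{K}(\mathbb{X},\mathbb{Y})^*})$ give the stated formula and the nonemptiness of $M_T\cap Ext(B_{\mathbb{X}})$. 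Your argument is correct (the only quibble being that the identity $Ext(E)=E\cap Ext(C)$ for a face $E$ of $C$ is an elementary fact about faces rather than an instance of Milman's theorem), so there is nothing to compare it against beyond the reference itself.
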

We end this section with the following definition:
A subspace $M$ of a Banach space $\mathbb{X}$ is said to be an $M-$ideal if there exists a projection $P$ on $\mathbb{X}^*$ such that $P(\mathbb{X}^*)=\{x^*\in \mathbb{X}^*:x^*(m)=0~\forall~ m\in M \}$ and for all $x^*\in \mathbb{X}^*,$
$$\|x^*\|=\|P(x^*)\|+\|x^*-P(x^*)\|.$$ 
It is well known that for a Hilbert space $\mathbb{H},$ $\mathbb{K}(\mathbb{H})$ is an $M-$ideal in $\mathbb{L}(\mathbb{H})$ and for each $1<p<\infty,$ $\mathbb{K}(\ell_p)$ is an $M-$ideal in $\mathbb{L}(\ell_p).$ Interested readers are referred to \cite{HWW} for more  information in this topic.

\section{$k-$smoothness of operators defined on Hilbert spaces}

We begin this section with the study of $k-$smooth operators defined between arbitrary Hilbert spaces. We use the notion of Birkhoff-James orthogonality to prove the theorem. Recall that, for $x,y\in \mathbb{X},$ $x$ is said to be Birkhoff-James orthogonal \cite{B,J} to $y,$ written as $x\perp_B y,$ if $\|x+\lambda y\|\geq \|x\|$ for each scalar $\lambda.$ Similarly, for $T,A\in \mathbb{L}(\mathbb{X},\mathbb{Y}),$ we say that $T\perp_B A$ if $\|T+\lambda A\|\geq \|T\|$ for each scalar $\lambda.$

\begin{theorem}\label{th-hil}
	Let $\mathbb{H}_1,\mathbb{H}_2$ be   Hilbert spaces. Let $T\in S_{\mathbb{L}(\mathbb{H}_1,\mathbb{H}_2)}$ be such that $M_T=S_{H_0},$ where $H_0$ is a finite-dimensional subspace of $\mathbb{H}_1$ with $\dim(H_0)=n$ and $\|T\|_{H_0^\perp}<1.$ Then $T$ is $n^2-$smooth, if $\mathbb{H}_1,\mathbb{H}_2$ are considered to be complex and it is ${{n+1}\choose{2}} -$smooth, if   the spaces are considered to be real.
\end{theorem}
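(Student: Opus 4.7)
The plan is to apply Lemma \ref{lemma-wojcik} to obtain an explicit description of $Ext~J(T)$ and then reduce the dimension count to a computation inside $M_n$, the space of $n\times n$ scalar matrices. First I would verify the hypotheses of the lemma: $\mathbb{H}_1$ is reflexive and $\mathbb{K}(\mathbb{H}_1,\mathbb{H}_2)$ is an $M$-ideal in $\mathbb{L}(\mathbb{H}_1,\mathbb{H}_2)$. To see that $\mathrm{dist}(T,\mathbb{K}(\mathbb{H}_1,\mathbb{H}_2))<1$, let $P$ be the orthogonal projection of $\mathbb{H}_1$ onto $H_0$; the operator $TP$ has finite rank and $\|T-TP\|=\|T|_{H_0^\perp}\|<1$.

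Since Hilbert spaces are strictly convex and smooth, $Ext(B_{\mathbb{H}_1})=S_{\mathbb{H}_1}$ and $J(Tx)=\{\langle\cdot,Tx\rangle\}$ for every $x\in M_T$. Combined with $M_T=S_{H_0}$, Lemma \ref{lemma-wojcik} therefore gives
\[
Ext~J(T)=\{\Lambda_x:x\in S_{H_0}\},\qquad \Lambda_x(S):=\langle Sx,Tx\rangle.
\]
Next I would observe that $M_T=S_{H_0}$ with $\|T\|=1$ forces $\|Tx\|=\|x\|$ for every $x\in H_0$, so by the polarization identity $T|_{H_0}$ is a linear isometry. Fixing an orthonormal basis $\{e_1,\ldots,e_n\}$ of $H_0$ and setting $f_j:=Te_j$, the family $\{f_1,\ldots,f_n\}$ is orthonormal in $\mathbb{H}_2$. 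A direct expansion then gives, for $x=\sum_i a_i e_i\in S_{H_0}$,
\[
\Lambda_x(S)=\sum_{i,j}a_i\,\bar a_j\,\langle Se_i,f_j\rangle,
\]
with $\bar a_j=a_j$ in the real case.

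Every $n\times n$ scalar matrix is realised as $(\langle Se_i,f_j\rangle)_{j,i}$ for some finite-rank $S$, so the dimension of the span of $Ext~J(T)$ equals the dimension of the span of the coefficient matrices $C(x)=(a_i\bar a_j)_{j,i}$ inside $M_n$. Over $\mathbb{R}$ the matrices $aa^T$ with $a\in S^{n-1}$ are the rank-one symmetric matrices, and by a standard polarization argument their span is the full space of symmetric $n\times n$ matrices, of dimension $\binom{n+1}{2}$. Over $\mathbb{C}$ the matrices $\bar a\,a^T$ are rank-one Hermitian; allowing complex scalars, their span contains every elementary matrix $e_ie_j^T$, hence coincides with all of $M_n(\mathbb{C})$, of complex dimension $n^2$.

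I expect the main obstacle to be this last polarization/scalar-extension step and, correspondingly, the care needed in tracking the scalar field: in the complex case the answer $n^2$ comes from the $\mathbb{C}$-linear closure of the rank-one Hermitian matrices filling up the \emph{whole} complex matrix algebra, not just its Hermitian part. Once this identification is in place, Lemma \ref{lemma-wojcik} together with strict convexity and smoothness of Hilbert spaces packages the rest into the stated orders $\binom{n+1}{2}$ and $n^2$.
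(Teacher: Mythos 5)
Your proposal is correct and follows the same overall route as the paper: verify the hypotheses of Lemma \ref{lemma-wojcik}, deduce that $Ext~J(T)=\{\Lambda_x:x\in S_{H_0}\}$ with $\Lambda_x(S)=\langle Sx,Tx\rangle$, and then count the dimension of the span of the coefficient matrices $(a_i\overline{a_j})$, obtaining $\binom{n+1}{2}$ in the real case and $n^2$ in the complex case. The one place where you genuinely diverge is the verification that $\mathrm{dist}(T,\mathbb{K}(\mathbb{H}_1,\mathbb{H}_2))<1$: the paper argues by contradiction, using Birkhoff--James orthogonality of $T$ to the compact operator $S=TP$ and invoking \cite[Th. 3.1]{PSG} to produce $x\in M_T$ with $Tx\perp Tx$; you instead observe directly that $TP$ is finite rank and $\|T-TP\|=\|T|_{H_0^\perp}\|<1$, which is shorter, entirely elementary, and avoids the external orthogonality theorem. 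Your explicit remark that $T|_{H_0}$ is an isometry (so that $\{Te_1,\dots,Te_n\}$ is orthonormal, hence linearly independent) makes precise a point the paper leaves implicit but needs for the equality $\dim~span~\{e_i\otimes Te_j\}=n^2$, and your polarization argument for the span of the rank-one symmetric (resp.\ Hermitian) matrices supplies the detail behind the paper's final dimension claims.
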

\begin{proof}
 We claim that $dist(T,\mathbb{K}(\mathbb{H}_1,\mathbb{H}_2))<1.$ If possible, suppose that this is not true. Then for every $S\in \mathbb{K}(\mathbb{H}_1,\mathbb{H}_2),~dist(T,\text{Span}\{S\})\geq dist (T,\mathbb{K}(\mathbb{H}_1,\mathbb{H}_2))\geq 1,$ i.e., for every scalar $\lambda,\|T-\lambda S\|\geq dist(T,\text{Span}\{S\})\geq 1.$ Therefore, $ T\perp_B S.$ Define $S:\mathbb{H}\to\mathbb{H}$ by $Sx=Tx$ whenever $x\in H_0$ and $Sx=0,$ whenever $x\in H_0^\perp.$ Then clearly, $S\in \mathbb{K}(\mathbb{H}_1,\mathbb{H}_2),$ since $H_0$ is finite-dimensional. Hence, $T\perp_B S.$ By \cite[Th. 3.1]{PSG}, there exists $x\in M_T=S_{H_0}$ such that $Tx\perp Sx,$ i.e., $ Tx\perp Tx,$ a contradiction. This establishes our claim. We note that $\mathbb{K}(\mathbb{H}_1,\mathbb{H}_2)$ is an $M-$ideal in $\mathbb{L}(\mathbb{H}_1,\mathbb{H}_2)$ and  $dist(T,\mathbb{K}(\mathbb{H}_1,\mathbb{H}_2))<1,$ so by Lemma \ref{lemma-wojcik}, $Ext(J(T))=\{y^*\otimes x:x\in M_T,y^*\in Ext (J(Tx))\},$ where $y^*\otimes x (S)=y^*(Sx)=\langle Sx,Tx\rangle$ for every $S\in \mathbb{L}(\mathbb{H}_1,\mathbb{H}_2).$ So we can write $Ext(J(T))=\{x\otimes Tx:x\in M_T\},$ where $x\otimes Tx(S)=\langle Sx,Tx\rangle$ for every $S\in \mathbb{L}(\mathbb{H}_1,\mathbb{H}_2).$  
 
 Suppose that $\{e_1,e_2,\ldots,e_n\}$ is an orthonormal basis of $H_0.$  Assume that $\mathbb{H}_1,\mathbb{H}_2$ are complex Hilbert spaces. Then 
\begin{eqnarray*}
	&&\dim~span~ Ext(J(T))\\
	&=&\dim~span~  \{x\otimes Tx:x\in M_T\}\\
	&=&\dim~span~ \Big\{\sum_{i,j=1}^na_i\overline{a_j}e_i\otimes Te_j:\sum_{i=1}^n|a_i|^2=1\Big\}\\
	&=& \dim~span~  \{e_i\otimes Te_j:1\leq i,j\leq n\}\\
	&=& n^2.
\end{eqnarray*}
Next assume that the Hilbert spaces are real, then  
\begin{eqnarray*}
	&&\dim~span~ Ext(J(T))\\
	&=&\dim~span~ \Big\{\sum_{i,j=1}^na_i{a_j}e_i\otimes Te_j:\sum_{i=1}^n|a_i|^2=1\Big\}\\
	&=& {{n+1}\choose{2}}.
\end{eqnarray*}
Thus, $T$ is $n^2-$smooth, if $\mathbb{H}_1,\mathbb{H}_2$ are considered to be complex and it is ${{n+1}\choose{2}} -$smooth, if   the spaces are considered to be real.
\end{proof}

 Noting that  for a compact operator $T,$ 
 $dist(T,\mathbb{K}(\mathbb{H}_1,\mathbb{H}_2))=0 <1,$ we get the following corollary which provides a sufficient condition  for the $k-$smoothness of a compact operator \cite[Th. 2.4]{Wa} defined between Hilbert spaces.
\begin{cor}
	Let $\mathbb{H}_1,\mathbb{H}_2$ be   Hilbert spaces. Let $T\in S_{\mathbb{K}(\mathbb{H}_1,\mathbb{H}_2)}$ be such that $M_T=S_{H_0},$ where $H_0$ is a finite-dimensional subspace of $\mathbb{H}_1$ with $\dim(H_0)=n.$ Then $T$ is $n^2-$smooth, if $\mathbb{H}_1,\mathbb{H}_2$ are considered to complex and it is ${{n+1}\choose{2}} -$smooth, if   the spaces are considered to be real.
	\end{cor}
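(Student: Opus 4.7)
The plan is to observe that the hypothesis $\|T\|_{H_0^\perp}<1$ in Theorem \ref{th-hil} is used only to establish dist$(T,\mathbb{K}(\mathbb{H}_1,\mathbb{H}_2))<1$, which is the key input for applying Lemma \ref{lemma-wojcik}. For a compact $T$ this distance equals $0$, which is trivially less than $1$, so the Birkhoff-James orthogonality argument at the start of the proof of Theorem \ref{th-hil} can be skipped entirely and the remainder of the argument carries over verbatim.

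Concretely, first I would note that $T\in \mathbb{K}(\mathbb{H}_1,\mathbb{H}_2)$ forces dist$(T,\mathbb{K}(\mathbb{H}_1,\mathbb{H}_2))=0<1$, while $\mathbb{K}(\mathbb{H}_1,\mathbb{H}_2)$ is an $M-$ideal in $\mathbb{L}(\mathbb{H}_1,\mathbb{H}_2)$ and Hilbert spaces are reflexive. Hence Lemma \ref{lemma-wojcik} applies and yields
\[Ext~J(T)=\{x\otimes Tx:x\in M_T\},\]
where $x\otimes Tx(S)=\langle Sx,Tx\rangle$; the extra condition $x\in Ext(B_{\mathbb{H}_1})$ appearing in Lemma \ref{lemma-wojcik} is automatic here because every unit vector of a Hilbert space is an extreme point of the unit ball.

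Next, I would repeat verbatim the dimension count from the proof of Theorem \ref{th-hil}. Picking an orthonormal basis $\{e_1,\ldots,e_n\}$ of $H_0$ and expanding a generic $x=\sum_{i=1}^n a_ie_i\in S_{H_0}$, one obtains
\[x\otimes Tx=\sum_{i,j=1}^n a_i\overline{a_j}\,e_i\otimes Te_j\]
in the complex case (and the analogous expression with $a_ia_j$ in the real case). In the complex case the span of these elements equals span$\{e_i\otimes Te_j:1\leq i,j\leq n\}$, giving $\dim~span~Ext~J(T)=n^2$, whereas in the real case the symmetry $a_ia_j=a_ja_i$ collapses the span to dimension ${{n+1}\choose{2}}$.

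There is no substantive obstacle here; the corollary is a strict relaxation of the hypotheses of Theorem \ref{th-hil}, and the only content is the remark that compactness supplies the distance bound for free. One could alternatively note that, for compact $T$, the equality $M_T=S_{H_0}$ together with weak compactness of $B_{H_0^\perp}$ already forces $\|T\|_{H_0^\perp}<1$ (otherwise the restriction would attain its norm and produce an extra norming vector outside $H_0$), and then quote Theorem \ref{th-hil} directly; both routes are equally short.
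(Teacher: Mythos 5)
Your proposal is correct and follows essentially the same route as the paper: the authors derive the corollary precisely by observing that compactness gives $dist(T,\mathbb{K}(\mathbb{H}_1,\mathbb{H}_2))=0<1$, so Lemma \ref{lemma-wojcik} applies and the dimension count from the proof of Theorem \ref{th-hil} goes through unchanged. Your closing remark (that $M_T=S_{H_0}$ plus compactness forces $\|T\|_{H_0^\perp}<1$, so one could instead quote Theorem \ref{th-hil} verbatim) is a valid alternative the paper does not spell out, but it is not needed.
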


Next we show that the conditions mentioned in Theorem \ref{th-hil} are necessary for $k-$smoothness.
To do so, we need  the following theorem on Birkhoff-James orthogonality for complex Hilbert spaces, in case of real Hilbert spaces analogous theorem can be obtained from \cite[Th. 3.2]{MPRS}. 

\begin{theorem}\label{th-complexhil}
	Let $\mathbb{H}_1,\mathbb{H}_2$ be complex Hilbert spaces. Let $T\in S_{\mathbb{L}(\mathbb{H}_1,\mathbb{H}_2)}$ be such that $dist(T,\mathbb{K}(\mathbb{H}_1,\mathbb{H}_2))<1.$ Then for $A\in \mathbb{L}(\mathbb{H}_1,\mathbb{H}_2),~T\perp_B A$ if and only if there exists $x\in M_T$ such that $Tx\perp Ax.$
\end{theorem}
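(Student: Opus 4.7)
The plan is to handle the easy direction by a direct Pythagorean computation and, for the converse, to combine Lemma \ref{lemma-wojcik}, spectral theory of $T^*T$, and the Toeplitz--Hausdorff theorem on convexity of numerical ranges. For sufficiency, if $x\in M_T$ satisfies $Tx\perp Ax$, then for every scalar $\lambda$ we have $\|T+\lambda A\|^2\ge\|(T+\lambda A)x\|^2=\|Tx\|^2+|\lambda|^2\|Ax\|^2\ge\|T\|^2$, so $T\perp_B A$.

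For necessity, I would first argue that $M_T=S_{H_0}$ for some finite-dimensional subspace $H_0$ of $\mathbb{H}_1$. Since $T^*T$ is positive self-adjoint with $\|T^*T\|=1$, the standard equivalence $\|Tx\|=\|x\|\iff T^*Tx=x$ on the unit sphere identifies $M_T=S_{\ker(I-T^*T)}$. Approximating $T^*T$ by $T^*K$ for a compact $K$ with $\|T-K\|<1$ shows $\mathrm{dist}(T^*T,\mathbb{K}(\mathbb{H}_1))<1$, so $1$ is an isolated point of the spectrum of the self-adjoint operator $T^*T$ with a finite-dimensional eigenspace $H_0$, whence $M_T=S_{H_0}$. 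Since $\mathbb{K}(\mathbb{H}_1,\mathbb{H}_2)$ is an $M$-ideal in $\mathbb{L}(\mathbb{H}_1,\mathbb{H}_2)$, Lemma \ref{lemma-wojcik} combined with the smoothness of Hilbert spaces (so $J(Tx)=\{\langle\,\cdot\,,Tx\rangle\}$) and $Ext(B_{\mathbb{H}_1})=S_{\mathbb{H}_1}$ gives
\[
Ext~J(T)=\{\phi_x:x\in S_{H_0}\},\qquad \phi_x(S):=\langle Sx,Tx\rangle.
\]

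Next, I would invoke the Hahn--Banach characterization of complex Birkhoff--James orthogonality: extending the functional $\alpha T+\beta A\mapsto\alpha\|T\|$ from the complex $2$-plane $\mathbb{C}T+\mathbb{C}A$ (where $T\perp_B A$ forces it to have norm $1$) to all of $\mathbb{L}(\mathbb{H}_1,\mathbb{H}_2)$ produces $\phi\in J(T)$ with $\phi(A)=0$. By Krein--Milman, $\phi$ lies in the weak*-closed convex hull of $\{\phi_x:x\in S_{H_0}\}$, and weak*-continuity of evaluation at $A$ then gives $0=\phi(A)\in\overline{\mathrm{co}}\,\{\langle Ax,Tx\rangle:x\in S_{H_0}\}$. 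The key observation is that for $x\in S_{H_0}$, $\langle Ax,Tx\rangle=\langle T^*Ax,x\rangle=\langle Bx,x\rangle$ where $B:=P_{H_0}T^*A|_{H_0}:H_0\to H_0$, so this set is the numerical range $W(B)$. By the Toeplitz--Hausdorff theorem $W(B)$ is convex, and it is compact since $H_0$ is finite-dimensional; hence $0\in\overline{\mathrm{co}}\,W(B)=W(B)$, producing $x\in S_{H_0}=M_T$ with $\langle Tx,Ax\rangle=0$.

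The main obstacle I anticipate is precisely this last step: in the real setting of \cite[Th.~3.2]{MPRS}, $\{\langle Ax,Tx\rangle:x\in S_{H_0}\}$ is a compact interval in $\mathbb{R}$, and connectedness plus the intermediate value theorem immediately recover $0\in W(B)$ from the convex-hull condition. Over $\mathbb{C}$ the set lives in the plane and can in principle wrap around $0$ without containing it; the nontrivial Toeplitz--Hausdorff convexity of the numerical range is what closes this gap.
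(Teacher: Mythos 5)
Your proof is correct, and it reaches the same decisive closing step as the paper --- convexity of the set $W=\{\langle Ax,Tx\rangle : x\in S_{H_0}\}$, i.e.\ of the numerical range of the compression $P_{H_0}T^*A|_{H_0}$ --- but it gets there by a different intermediate route. The paper applies Singer's characterization of Birkhoff--James orthogonality \cite[Th. 1.1, p. 170]{S} to write $0=\sum_{i=1}^{3}\lambda_i\langle Ax_i,Tx_i\rangle$ as a convex combination of three values attained at points of $M_T$, after which only convexity of $W$ (credited to the idea of \cite{DMS}) is needed; you instead produce a single $\phi\in J(T)$ with $\phi(A)=0$ by Hahn--Banach and pass to $0\in\overline{\mathrm{co}}\,W$ via Krein--Milman and weak*-continuity of evaluation at $A$. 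Your route additionally requires $W$ to be closed, which is why you must show $H_0$ is finite-dimensional; you supply this with the essential-spectrum argument ($\mathrm{dist}(T^*T,\mathbb{K}(\mathbb{H}_1))<1$, so $1$ is an isolated eigenvalue of finite multiplicity of $T^*T$ and $M_T=S_{\ker(I-T^*T)}$). That is in fact a gain in completeness: the paper's proof introduces $H_0$ and the identity $M_T=S_{H_0}$ inside Theorem \ref{th-complexhil} without justifying them there (the identification via \cite[Th. 2.2]{SP} is only invoked in the subsequent theorem), whereas you establish the structure of $M_T$ where it is first used. Both arguments ultimately rest on the Toeplitz--Hausdorff-type convexity of the restricted numerical range, which, as you correctly observe, is precisely what substitutes for the intermediate value theorem available in the real case of \cite[Th. 3.2]{MPRS}. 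The only loose end is the degenerate case $A\in\mathbb{C}T$, where your two-dimensional Hahn--Banach construction is not literally available, but there $T\perp_B A$ forces $A=0$ and any $x\in M_T$ works.
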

\begin{proof}
If there exists $x\in M_T$ such that $Tx\perp Ax,$ then it is easy to observe that $T\perp_B A.$ Conversely, suppose that $T\perp_B A.$ Then by \cite[Th. 1.1, Page 170]{S}, there exists $\lambda_i\geq 0, f_i\in Ext~J(T)$ for $1\leq i\leq 3$ such that $\lambda_1+\lambda_2+\lambda_3=1$ and $(\lambda_1f_1+\lambda_2f_2+\lambda_3f_3)(A)=0.$ By Lemma \ref{lemma-wojcik}, there exists $x_i\in M_T,y_i^*\in Ext~J(Tx_i)$ for $1\leq i\leq 3$ such that $f_i=y_i^*\otimes x_i.$ Since $Tx_i$ is smooth, it is easy to observe that $y_i^*\otimes x_i(A)=\langle Ax_i,Tx_i\rangle.$ Thus, 
\begin{eqnarray*}
\sum_{i=1}^3 \lambda_if_i(A)&=&0\\
\Rightarrow \sum_{i=1}^3 \lambda_i y_i^*\otimes x_i(A)&=&0
\end{eqnarray*}	
\begin{eqnarray}\label{eq-convex}
\Rightarrow \sum_{i=1}^3 \lambda_i \langle Ax_i,Tx_i\rangle=0.
\end{eqnarray}

Consider the set $ W = \{ \langle Ax, Tx \rangle :x \in S_{H_0}\} = \{ \langle Ax, Tx \rangle : x \in M_T\}$. Since $H_0$ is a subspace of $ \mathbb{H}_1,$ following the idea of \cite[Th. 1]{DMS}, it can be easily verified that the set $ W$  is convex. Now,  it follows from (\ref{eq-convex}) that $0$ is in the convex hull of $W,$ i.e., $0\in W.$ Therefore,  there exists $ x \in M_T$ such that $\langle Ax, Tx \rangle = 0 $ and so $ Tx \perp Ax.$
This completes the proof of the theorem.
\end{proof}

Now, we are ready to prove our desired theorem.

\begin{theorem}
	Let $\mathbb{H}$ be a separable complex Hilbert space and  $T\in S_{\mathbb{L}(\mathbb{H})}.$ Then $T$ is $n^2-$smooth if and only if $M_T=S_{H_0},$ where $H_0$ is a finite-dimensional subspace of $\mathbb{H}$ with $\dim(H_0)=n$ and $\|T\|_{H_0^\perp}<1.$ \\
In case the Hilbert space is real then the result still holds good with $n^2-$smoothness replaced by  ${{n+1}\choose {2}}-$smoothness. 
\end{theorem}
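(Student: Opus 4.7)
My plan is to attack the nontrivial (necessity) direction in four steps: first establishing that $\mathrm{dist}(T,\mathbb{K}(\mathbb{H}))<1$, then invoking Lemma~\ref{lemma-wojcik} to pin down $Ext\,J(T)$, then reading off $M_T$ and $\dim H_0$ from the spectral data of $T^{*}T$, and finally deducing $\|T|_{H_0^{\perp}}\|<1$. The sufficient direction is Theorem~\ref{th-hil}; the real case runs identically with Theorem~\ref{th-complexhil} replaced by its real analogue \cite[Th. 3.2]{MPRS}.

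The main obstacle is the first step. Assume $T$ is $n^{2}$-smooth and suppose for contradiction that $\mathrm{dist}(T,\mathbb{K}(\mathbb{H}))=\|T\|_{e}=1$. Using $\|T(I-P)\|\geq 1$ for every finite-dimensional projection $P$, I extract an orthonormal sequence $(x_k)\subset S_{\mathbb{H}}$ with $\|Tx_k\|\to 1$; since $x_k\rightharpoonup 0$ forces $Tx_k\rightharpoonup 0$, a Gram--Schmidt extraction arranges $\langle Tx_i,Tx_j\rangle\to 0$ for $i\neq j$. The unit functionals $\psi_k(S):=\langle Sx_k,Tx_k/\|Tx_k\|\rangle$ satisfy $\psi_k(T)\to 1$ and $\psi_k(K)\to 0$ for every $K\in\mathbb{K}(\mathbb{H})$, so weak-$*$ cluster points of suitable convex blocks of $(\psi_k)$ belong to $J(T)\cap\mathbb{K}(\mathbb{H})^{\perp}$. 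Because $\mathbb{K}(\mathbb{H})$ is an $M$-ideal in $\mathbb{L}(\mathbb{H})$, this set is a weak-$*$ closed face of $J(T)$ whose extreme points remain extreme in $J(T)$. Splitting the orthonormal sequence over pairwise orthogonal infinite-dimensional subspaces of $\mathbb{H}$ yields distinct weak-$*$ limit functionals, producing infinitely many linearly independent members of $Ext\,J(T)$, which contradicts $n^{2}$-smoothness.

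With $\mathrm{dist}(T,\mathbb{K}(\mathbb{H}))<1$ established, Lemma~\ref{lemma-wojcik} gives $Ext\,J(T)=\{(Tx)\otimes x:x\in M_T\}$ (using that every nonzero vector in $\mathbb{H}$ is smooth, so $Ext\,J(Tx)=\{Tx\}$ via Riesz), and in particular $M_T\neq\emptyset$. In Hilbert space $\|Tx\|=\|T\|$ for $x\in S_{\mathbb{H}}$ iff $T^{*}Tx=\|T\|^{2}x$, so $M_T=S_{H_0}$ with $H_0:=\ker(T^{*}T-\|T\|^{2}I)$ a closed subspace. For an orthonormal basis $(e_j)_{j\in I}$ of $H_0$, the identity $\langle Te_i,Te_j\rangle=\|T\|^{2}\delta_{ij}$ makes $\{e_i\otimes Te_j\}_{i,j\in I}$ linearly independent in $\mathbb{L}(\mathbb{H})^{*}$ (test on rank-one operators), and the span computation from the proof of Theorem~\ref{th-hil} gives $\dim\,\mathrm{span}(Ext\,J(T))=|I|^{2}$. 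Equating with $n^{2}$ forces $\dim H_0=n$.

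For the final step $\|T|_{H_0^{\perp}}\|<1$: if $H_0^{\perp}$ is finite-dimensional and $\|T|_{H_0^{\perp}}\|=1$, then $T|_{H_0^{\perp}}$ attains its norm at some $y\in S_{H_0^{\perp}}$, forcing $y\in M_T=S_{H_0}$, a contradiction; if $H_0^{\perp}$ is infinite-dimensional and $\|T|_{H_0^{\perp}}\|=1$, an orthonormal sequence $(y_k)\subset S_{H_0^{\perp}}$ with $\|Ty_k\|\to 1$ would satisfy $y_k\rightharpoonup 0$, hence $Ky_k\to 0$ for every compact $K$, giving $\|T-K\|\geq\lim\|(T-K)y_k\|=1$ and $\mathrm{dist}(T,\mathbb{K}(\mathbb{H}))\geq 1$, contradicting the first step. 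Producing the infinitely many linearly independent extreme supporting functionals required in the first step is the main technical hurdle; once it is in place, Steps~2--4 follow formally from Lemma~\ref{lemma-wojcik}, the spectral theorem for $T^{*}T$, and the weak-null sequence argument.
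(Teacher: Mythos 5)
Your overall skeleton matches the paper's: sufficiency via Theorem~\ref{th-hil}, then for necessity establish $\mathrm{dist}(T,\mathbb{K}(\mathbb{H}))<1$, apply Lemma~\ref{lemma-wojcik}, identify $M_T=S_{H_0}$, and pin down $\dim H_0$ and $\|T\|_{H_0^\perp}$. Your Steps 2--4 are correct and in fact more self-contained than the paper's: where the paper invokes Theorem~\ref{th-complexhil} together with \cite[Th.~3.1]{PSG} to force $H_0$ finite-dimensional and $\|T\|_{H_0^\perp}<1$, you get finite-dimensionality directly from the count $\dim\,\mathrm{span}\,Ext\,J(T)=|I|^2$ (infinite if $|I|$ is infinite, contradicting $n^2$-smoothness) and $\|T\|_{H_0^\perp}<1$ from a weakly-null maximizing sequence that would force $\mathrm{dist}(T,\mathbb{K}(\mathbb{H}))\geq 1$. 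That is a legitimate, arguably cleaner route for those steps (your spectral description $M_T=S_{\ker(T^*T-\|T\|^2I)}$ replaces the citation of \cite[Th.~2.2]{SP} and is standard).

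The genuine gap is in Step 1, and you have correctly identified it as the main hurdle without closing it. To derive a contradiction from $\mathrm{dist}(T,\mathbb{K}(\mathbb{H}))=1$ you need \emph{infinitely many linearly independent} elements of $Ext\,J(T)$, but your construction delivers much less. Weak-$*$ cluster points of the $\psi_k$ (or of convex blocks of them) land in $J(T)\cap\mathbb{K}(\mathbb{H})^{\perp}$, and the $M$-ideal property does make this set a face of $J(T)$ whose extreme points are extreme in $J(T)$; however, (i) a weak-$*$ cluster point of a sequence of functionals need not itself be an extreme point, so you would still have to pass to $Ext$ of that face via Krein--Milman, and more seriously (ii) nothing in the argument prevents all the limit functionals obtained from your different orthonormal subfamilies from coinciding, or from spanning only a finite-dimensional space. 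The assertion that the splitting ``yields distinct weak-$*$ limit functionals, producing infinitely many linearly independent members of $Ext\,J(T)$'' is exactly the content of the statement that no element of the Calkin-type quotient $\mathbb{L}(\mathbb{H})/\mathbb{K}(\mathbb{H})$ is a multi-smooth point of finite order, which is a nontrivial theorem; the paper does not reprove it but cites \cite[Th.~3.8]{LR} together with \cite[Remark~3.7]{LR} (the latter supplying the face/transfer argument from the quotient back to $J(T)$). As written, your Step 1 is an unproven claim, so the necessity direction is incomplete unless you either supply a genuine proof of the linear independence or fall back on the Lin--Rao results as the paper does.
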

\begin{proof}
   First suppose that $\mathbb{H}$ is complex Hilbert space. The sufficient part of the theorem follows from Theorem \ref{th-hil}. We only prove the necessary part. Suppose that $T$ is $n^2-$smooth. Since $\mathbb{H}$ is separable, by \cite[Th. 3.8]{LR}, $\mathbb{L}(\mathbb{H})/\mathbb{K}(\mathbb{H})$ has no operator whose order of smoothness is finite. Hence, by \cite[Remark 3.7]{LR}, $dist(T,\mathbb{K}(\mathbb{H}))<1.$	Thus, by Lemma \ref{lemma-wojcik}, $M_T\neq \emptyset.$ From \cite[Th. 2.2]{SP}, we get $M_T=S_{H_0}$ for some subspace $H_0$ of $\mathbb{H}.$ Now, let $A\in \mathbb{L}(\mathbb{H})$ be such that $T\perp_BA.$ Then using Theorem \ref{th-complexhil}, we get $x\in M_T$ such that $Tx\perp Ax.$ Thus, by \cite[Th. 3.1]{PSG}, $H_0$ is a finite dimensional subspace of $\mathbb{H}$ and $\|T\|_{H_0^\perp}<1.$ Let $\dim(H_0)=k.$ Then from Theorem \ref{th-hil}, we get $T$ is $k^2-$smooth. Therefore, $k^2=n^2,$ i.e., $k=n.$ Thus, $\dim(H_0)=n.$\\
   The proof of the theorem for real Hilbert space follows similarly using \cite[Th. 3.2]{MPRS}. 
\end{proof}

\section{$k-$smoothness of operators defined on Banach spaces}
In this section, we study $k-$smoothness of operators defined between Banach spaces. We begin with the following simple lemma, the proof of which is given for the sake of completeness.
\begin{lemma}\label{lemma-ind}
	Suppose  $\mathbb{X},\mathbb{Y}$ are  Banach spaces. If $\{x_1,x_2,\ldots,x_m\}$  and $\{y_1^*,y_2^*,$ $\ldots,y_n^*\}$ are linearly independent subsets of $\mathbb{X}$   and  $\mathbb{Y}^*$ respectively, then $\{y_i^*\otimes x_j:1\leq i\leq n,~1\leq j\leq m\}$ is a linearly independent subset of $\mathbb{L}(\mathbb{X},\mathbb{Y})^*.$  	
\end{lemma}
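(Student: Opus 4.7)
The plan is to produce two biorthogonal systems and then use elementary rank-one operators to separate the coefficients.

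First I would use the fact that $\{x_1,\ldots,x_m\}$ is linearly independent in $\mathbb{X}$, together with the Hahn--Banach theorem applied on the finite-dimensional subspace $\mathrm{span}\{x_1,\ldots,x_m\}$, to produce functionals $f_1,\ldots,f_m \in \mathbb{X}^*$ satisfying $f_k(x_j)=\delta_{kj}$. Dually, since $\{y_1^*,\ldots,y_n^*\}\subset \mathbb{Y}^*$ is linearly independent, the linear map $\Phi:\mathbb{Y}\to \mathbb{R}^n$ given by $\Phi(y)=(y_1^*(y),\ldots,y_n^*(y))$ has image whose annihilator is $\{0\}$; hence $\Phi$ is surjective and I can pick $y_1,\ldots,y_n \in \mathbb{Y}$ with $y_i^*(y_l)=\delta_{il}$.

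Next, suppose there exist scalars $c_{ij}$ (for $1\le i\le n$, $1\le j\le m$) such that $\sum_{i,j} c_{ij}\, y_i^*\otimes x_j = 0$ in $\mathbb{L}(\mathbb{X},\mathbb{Y})^*$. For each pair $(k,l)$ with $1\le k\le m$, $1\le l\le n$, define the rank-one operator $S_{kl}\in \mathbb{L}(\mathbb{X},\mathbb{Y})$ by $S_{kl}(x)=f_k(x)\,y_l$. Evaluating the vanishing functional at $S_{kl}$ yields
\begin{equation*}
0 \;=\; \sum_{i,j} c_{ij}\,(y_i^*\otimes x_j)(S_{kl}) \;=\; \sum_{i,j} c_{ij}\,y_i^*(S_{kl}x_j) \;=\; \sum_{i,j} c_{ij}\,f_k(x_j)\,y_i^*(y_l) \;=\; c_{lk},
\end{equation*}
using the two biorthogonality relations in the last step. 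Since $(k,l)$ was arbitrary, all coefficients vanish, which is exactly the required linear independence.

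There is no real obstacle here: the only mildly nontrivial ingredient is the surjectivity of $\Phi$, which is a standard consequence of linear independence of functionals on a Banach space (and does not require reflexivity or any completeness of $\mathbb{Y}$), so the whole argument reduces to a clean biorthogonal pairing via rank-one test operators.
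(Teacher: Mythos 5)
Your proof is correct and takes essentially the same approach as the paper's: both test the vanishing linear combination against rank-one operators of the form $x\mapsto \phi(x)y$. The only difference is that you fix biorthogonal systems $f_k(x_j)=\delta_{kj}$ and $y_i^*(y_l)=\delta_{il}$ in advance so that each test operator isolates a single coefficient, whereas the paper lets $\phi\in\mathbb{X}^*$ and $y\in\mathbb{Y}$ range freely and invokes the two linear independence hypotheses at the end; the mathematical content is identical.
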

\begin{proof}
Let $c_{ij}$ be scalars such that 
\begin{eqnarray}\label{eq01}
\sum_{1\leq i\leq n,1\leq j\leq m}c_{ij}y_i^*\otimes x_j=0.  
\end{eqnarray}	
Choose $y\in \mathbb{Y},\phi\in \mathbb{X}^*.$ Define $S\in \mathbb{L}(\mathbb{X},\mathbb{Y})$ by $Sx=\phi(x)y$ for all $x\in \mathbb{X}.$ Now, from  (\ref{eq01}) we get, 
\begin{eqnarray*}
&& \sum_{1\leq i\leq n,1\leq j\leq m}c_{ij}y_i^*\otimes x_j(S)=0\\
&\Rightarrow & \sum_{1\leq i\leq n,1\leq j\leq m}c_{ij}y_i^* S(x_j)=0\\
&\Rightarrow&\sum_{1\leq i\leq n,1\leq j\leq m}c_{ij}\phi(x_j)y_i^*(y) =0\\
&\Rightarrow&\phi \Bigg(\sum_{1\leq i\leq n,1\leq j\leq m}c_{ij}x_jy_i^*(y)\Bigg) =0\\
&\Rightarrow& \sum_{1\leq i\leq n,1\leq j\leq m}c_{ij}x_jy_i^*(y) =0,~(\text{since~}\phi\in \mathbb{X}^*~\text{is~arbitrary})\\
&\Rightarrow&\sum_{1\leq i\leq n}c_{ij}y_i^*(y) =0~\text{for~all~}1\leq j\leq m\\
&\Rightarrow&\sum_{1\leq i\leq n}c_{ij}y_i^*=0~(\text{since~}y\in\mathbb{Y}~\text{is~ arbitrary})\\
&\Rightarrow& c_{ij}=0 ~\text{for~all~}1\leq j\leq m,1\leq i\leq n.
\end{eqnarray*}
Thus, $\{y_i^*\otimes x_j:1\leq i\leq n,1\leq j\leq m\}$ is a linearly independent subset of $\mathbb{L}(\mathbb{X},\mathbb{Y})^*.$  	
\end{proof}
We are in a position to prove the following theorem which gives a sufficient condition for $k-$smoothness of operators defined between infinite dimensional Banach spaces, which improves on \cite[Th. 2.2]{MP}.
\begin{theorem}\label{th-ind}
	Suppose $\mathbb{X}$ is a reflexive Banach space and $\mathbb{Y}$ is an arbitrary Banach space. Let $\mathbb{K}(\mathbb{X},\mathbb{Y})$ be an $M-$ideal in $\mathbb{L}(\mathbb{X},\mathbb{Y}).$ Suppose that $T\in S_{\mathbb{L}(\mathbb{X},\mathbb{Y})}$ is such that $dist(T,\mathbb{K}(\mathbb{X},\mathbb{Y}))<1$ and  $M_T\cap Ext(B_{\mathbb{X}})=\{\pm x_1,\pm x_2,\ldots,\pm x_r\},$ where $\{x_1,x_2,\ldots,x_r\}$ is linearly independent in $\mathbb{X}.$ Let $Tx_i$ be $m_i-$smooth for each $1\leq i\leq r.$  Then $T$ is $k-$smooth, where $m_1+m_2+\ldots+m_r=k.$ 
\end{theorem}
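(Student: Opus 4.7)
My plan is to compute $\dim\operatorname{span} Ext~J(T)$ directly and show it equals $k = \sum_{i=1}^{r} m_i$ by combining Lemma \ref{lemma-wojcik} with a linear-independence argument in the spirit of Lemma \ref{lemma-ind}. Since $T$ is $k$-smooth precisely when $\dim\operatorname{span} Ext~J(T) = k$, this will suffice.

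First, by Lemma \ref{lemma-wojcik} and the hypothesis $M_T \cap Ext(B_{\mathbb{X}}) = \{\pm x_1, \ldots, \pm x_r\}$, I would write
\[
Ext~J(T) \;=\; \bigcup_{i=1}^{r} \{\, y^{*}\otimes(\pm x_i) : y^{*}\in Ext~J(T(\pm x_i))\,\}.
\]
Since $J(-z) = -J(z)$ for any unit vector $z$, the signs only flip the tensors, so
\[
\operatorname{span}~Ext~J(T) \;=\; \sum_{i=1}^{r}\operatorname{span}\{\,y^{*}\otimes x_i : y^{*}\in Ext~J(Tx_i)\,\}.
\]
Because $Tx_i$ is $m_i$-smooth, $\dim\operatorname{span}~Ext~J(Tx_i)=m_i$, so I can select $m_i$ linearly independent elements $y^{*}_{i,1},\ldots,y^{*}_{i,m_i} \in Ext~J(Tx_i)$ for each $i$. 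Then
\[
\operatorname{span}~Ext~J(T) \;=\; \operatorname{span}\{\, y^{*}_{i,\ell}\otimes x_i : 1\le i\le r,\ 1\le \ell\le m_i\,\},
\]
giving at most $k$ spanning vectors.

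The main step (and really the only nontrivial one) is to prove that these $k$ tensors are linearly independent. Suppose $\sum_{i,\ell} c_{i,\ell}\, y^{*}_{i,\ell}\otimes x_i = 0$ in $\mathbb{L}(\mathbb{X},\mathbb{Y})^{*}$. Since $\{x_1,\ldots,x_r\}$ is linearly independent in $\mathbb{X}$, by Hahn--Banach there exist $\phi_j \in \mathbb{X}^{*}$ with $\phi_j(x_i)=\delta_{ij}$. For each $y\in\mathbb{Y}$, the rank-one map $S_{j,y}(x):=\phi_j(x)\,y$ is compact, and evaluating the sum at $S_{j,y}$ yields
\[
\sum_{i,\ell} c_{i,\ell}\,\phi_j(x_i)\,y^{*}_{i,\ell}(y) \;=\; \sum_{\ell=1}^{m_j} c_{j,\ell}\, y^{*}_{j,\ell}(y) \;=\; 0.
\]
As $y\in\mathbb{Y}$ is arbitrary, $\sum_{\ell} c_{j,\ell}\,y^{*}_{j,\ell}=0$ in $\mathbb{Y}^{*}$, and the linear independence of the chosen $y^{*}_{j,\ell}$ forces $c_{j,\ell}=0$ for all $\ell$. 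Repeating for each $j$ gives the desired independence, hence $\dim\operatorname{span}~Ext~J(T)=k$ and $T$ is $k$-smooth. The main obstacle is precisely this independence step, but it is a direct adaptation of the rank-one evaluation trick used in Lemma \ref{lemma-ind}, handling the complication that the supporting functionals $y^{*}_{i,\ell}$ are attached to distinct base vectors $x_i$.
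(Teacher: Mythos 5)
Your proposal is correct and follows essentially the same route as the paper: both invoke Lemma \ref{lemma-wojcik} to identify $Ext~J(T)$ with the tensors $y^{*}\otimes x_i$, choose a basis of $\operatorname{span}~Ext~J(Tx_i)$ for each $i$, and verify linear independence of the resulting $k$ tensors by the rank-one evaluation argument of Lemma \ref{lemma-ind}. Your explicit use of dual functionals $\phi_j$ with $\phi_j(x_i)=\delta_{ij}$ merely spells out the step the paper dismisses as ``similar arguments as in Lemma \ref{lemma-ind}.''
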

\begin{proof}
	Since $Tx_i$ is $m_i-$smooth, we have, $m_i= \dim~ span~ Ext~ J(Tx_i),$ for each $1\leq i\leq r.$ 
	Let $\{ y_{ij}^*\in Ext ~J(Tx_i):1\leq j\leq m_i\}$ be a basis of $span~Ext~J(Tx_i)$ for each $1\leq i\leq r.$ Using similar arguments as in Lemma \ref{lemma-ind}, it can be shown that $\{y_{ij}^*\otimes x_i:1\leq i\leq r,1\leq j\leq m_i \}$ is linearly independent. Now, using Lemma \ref{lemma-wojcik}, we get,
	\begin{eqnarray*}
		&&  span~ Ext ~J(T)\\
		&=&  span ~\{y_{ij}^*\otimes x_i:y_{ij}^*\in Ext ~J(Tx_i),1\leq i\leq r \}\\
		&=& span ~\{y_{ij}^*\otimes x_i:1\leq i\leq r,1\leq j\leq m_i \}.
	\end{eqnarray*}
	Therefore, $\dim~span~Ext~J(T)=m_1+m_2+\ldots+m_r.$ Thus, $T$ is $k-$smooth, where $k=m_1+m_2+\ldots+m_r.$ This completes the proof of the theorem.
\end{proof}

The following corollary now easily follows from Theorem \ref{th-ind}. Once again, we recall that $\mathbb{K}(\ell_p)$ is an $M-$ideal in $\mathbb{L}(\ell_p),$ where $1<p<\infty.$
\begin{corollary}\label{cor-lp}
	Let $T\in S_{\mathbb{L}(\ell_p)},$ where $1<p<\infty.$ Suppose that $dist(T,\mathbb{K}(\ell_p))<1$ and  $M_T=\{\pm x_1,\pm x_2,\ldots,\pm x_k\},$ where $\{x_1,x_2,\ldots,x_k\}$ is linearly independent in $\ell_p.$ Then $T$ is $k-$smooth.
\end{corollary}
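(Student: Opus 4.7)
The plan is to deduce this corollary as a direct specialization of Theorem \ref{th-ind}, checking each hypothesis in the $\ell_p$ setting. Since $1<p<\infty$, the space $\ell_p$ is reflexive, and it has already been recalled in the excerpt that $\mathbb{K}(\ell_p)$ is an $M-$ideal in $\mathbb{L}(\ell_p)$; the condition $dist(T,\mathbb{K}(\ell_p))<1$ is given in the statement, so the ambient hypotheses of Theorem \ref{th-ind} are in place.

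Next, the hypothesis of Theorem \ref{th-ind} is phrased in terms of $M_T\cap Ext(B_\mathbb{X})$, whereas the corollary describes $M_T$ itself. To reconcile these, I would invoke the fact that $\ell_p$ is strictly convex for $1<p<\infty$, which gives $S_{\ell_p}=Ext(B_{\ell_p})$. Hence
\[M_T\cap Ext(B_{\ell_p})=M_T=\{\pm x_1,\pm x_2,\ldots,\pm x_k\},\]
with $\{x_1,\ldots,x_k\}$ linearly independent, matching the second hypothesis of Theorem \ref{th-ind} with $r=k$.

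Finally, I would use that $\ell_p$ is also smooth for $1<p<\infty$; every unit vector of $\ell_p$ is a smooth point, so each $Tx_i\in S_{\ell_p}$ satisfies $m_i=1$. Applying Theorem \ref{th-ind} with $r=k$ and $m_1=m_2=\cdots=m_k=1$ then yields that $T$ is $(1+1+\cdots+1)-$smooth, i.e., $k-$smooth. There is no substantive obstacle here; the two geometric facts about $\ell_p$ (strict convexity and smoothness) are precisely what is needed to pass from the abstract form of Theorem \ref{th-ind} to the cleaner statement of Corollary \ref{cor-lp}.
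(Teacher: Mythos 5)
Your proposal is correct and follows essentially the same route as the paper: strict convexity of $\ell_p$ identifies $M_T$ with $M_T\cap Ext(B_{\ell_p})$, smoothness of $\ell_p$ gives $m_i=1$ for each $Tx_i$, and Theorem \ref{th-ind} then yields $k$-smoothness. The only difference is that you spell out the verification of the ambient hypotheses (reflexivity, $M$-ideal, distance condition) which the paper leaves implicit.
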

\begin{proof}
	For $1<p<\infty,$ $\ell_p$ is strictly convex, smooth space. Hence, $x_i\in Ext(B_{\ell_p})$ and $Tx_i$ is smooth for each $1\leq i\leq k.$ Thus, by Theorem \ref{th-ind}, $T$ is $k-$smooth.
\end{proof}

Now, we exhibit an easy example to show that the converse of Corollary \ref{cor-lp} is not true, i.e., there exists $k-$smooth operator $T\in \mathbb{L}(\ell_p)$ such that $M_T$ is not of the form $\{\pm x_1,\pm x_2,\ldots,\pm x_k\},$ where $\{x_1,x_2,\ldots,x_k\}$ is linearly independent in $\ell_p.$

\begin{example}\label{ex-lp}
	Consider the operator $T\in \mathbb{L}(\ell_p),~(1<p<\infty)$ defined by $T(\sum_{i}a_ie_i)$ $=a_1e_1+a_2e_2,$ where $\{e_i:i\in \mathbb{N}\}$ is the canonical basis of $\ell_p.$ Then $M_T=span\{e_1,e_2\}.$ Let $x=ae_1+be_2,$ where $a,b\neq 0.$ Then $ x \in M_T $ and  so by Lemma \ref{lemma-wojcik}, $e_1^*\otimes e_1,e_2^*\otimes e_2,x^*\otimes x\in Ext~J(T).$ Now, it is easy to show that $\{e_1^*\otimes e_1,e_2^*\otimes e_2,x^*\otimes x\}$ is linearly independent. Therefore, $T$ is $k-$smooth, where $k\geq 3.$ But $M_T$ does not contain $3$ linearly independent vectors. 
\end{example} 
\begin{remark}
 Example \ref{ex-lp} illustrates the fact that one part of the Theorem \cite[Th. 2.3]{KS}, namely $(i) \Rightarrow (ii)$, is not correct and  Theorem \ref{th-ind}  improves on the other part of the same theorem.
\end{remark}

Next, we study the $k-$smoothness  of bounded linear operator $T$ for which $ M_T \cap Ext(B_{\mathbb{X}})$ may contain contain linearly dependent vectors.

\begin{theorem}\label{th-result 7}
	Let $\mathbb{X}$ be a reflexive Banach space and $\mathbb{Y}$ be a finite-dimensional Banach spaces with $\dim(\mathbb{Y})=m.$ Let $T \in S_{\mathbb{L}(\mathbb{X}, \mathbb{Y})}$ be such that $\{x_1,x_2,\ldots, x_r\}  \subseteq M_T \cap Ext(B_{\mathbb{X}}) \subseteq span\{x_1,x_2,\ldots,x_r\},$  where $ \{ x_1, x_2, \ldots, x_r\} $ is linearly independent.  Suppose $Tx_i$ is $m-$smooth for $i=1,2,\ldots,r.$ Then $T$ is $mr-$smooth.
\end{theorem}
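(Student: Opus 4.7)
The plan is to apply Lemma \ref{lemma-wojcik} to obtain an explicit description of $Ext~J(T)$, and then perform a dimension count using a linear-algebra argument in the spirit of Lemma \ref{lemma-ind}. Since $\mathbb{Y}$ is finite-dimensional, every bounded linear operator into $\mathbb{Y}$ is compact, so $\mathbb{K}(\mathbb{X},\mathbb{Y})=\mathbb{L}(\mathbb{X},\mathbb{Y})$, which is trivially an $M$-ideal in itself, and $dist(T,\mathbb{K}(\mathbb{X},\mathbb{Y}))=0<1$. Consequently, Lemma \ref{lemma-wojcik} yields
\[Ext~J(T)=\{y^*\otimes x:x\in M_T\cap Ext(B_{\mathbb{X}}),~y^*\in Ext~J(Tx)\}.\]

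The crucial observation driving the argument is that, since each $Tx_i$ is $m$-smooth and $\dim \mathbb{Y}^*=m$, the set $Ext~J(Tx_i)$ must span the entire dual $\mathbb{Y}^*$; hence for each $i$ I can select $\{y_{i1}^*,\ldots,y_{im}^*\}\subseteq Ext~J(Tx_i)$ forming a basis of $\mathbb{Y}^*$. Set $\mathcal{F}=\{y_{ij}^*\otimes x_i:1\le i\le r,~1\le j\le m\}\subseteq Ext~J(T)$. To secure the lower bound $\dim~span~Ext~J(T)\ge mr$, I would show that $\mathcal{F}$ is linearly independent by replaying the proof of Lemma \ref{lemma-ind}: a vanishing relation $\sum_{i,j}c_{ij}y_{ij}^*\otimes x_i=0$, evaluated on rank-one operators $S(\cdot)=\phi(\cdot)y$ with $\phi\in \mathbb{X}^*$ chosen to isolate individual $x_i$'s (using the linear independence of $\{x_1,\ldots,x_r\}$ together with the Hahn--Banach theorem), forces $\sum_j c_{ij}y_{ij}^*=0$ for each fixed $i$, and the basis property of $\{y_{ij}^*\}_j$ then gives $c_{ij}=0$.

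For the matching upper bound, take any $y^*\otimes x\in Ext~J(T)$. The hypothesis $M_T\cap Ext(B_{\mathbb{X}})\subseteq span\{x_1,\ldots,x_r\}$ lets me write $x=\sum_i c_i x_i$, and for each $i$ the basis property of $\{y_{ij}^*\}_{j}$ produces scalars $d_{ij}$ with $y^*=\sum_j d_{ij}y_{ij}^*$. A direct expansion applied to an arbitrary $S\in \mathbb{L}(\mathbb{X},\mathbb{Y})$ yields
\[y^*\otimes x=\sum_{i=1}^r c_i(y^*\otimes x_i)=\sum_{i=1}^r\sum_{j=1}^m c_i d_{ij}(y_{ij}^*\otimes x_i),\]
so $Ext~J(T)\subseteq span~\mathcal{F}$, giving $\dim~span~Ext~J(T)\le mr$ and finishing the proof that $T$ is $mr$-smooth.

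I do not anticipate a serious technical obstacle here: the conceptual heart of the argument is simply recognizing that $m$-smoothness of $Tx_i$ inside an $m$-dimensional codomain is maximal and therefore pins down $span~Ext~J(Tx_i)=\mathbb{Y}^*$. The only point demanding real care is the linear independence of $\mathcal{F}$ across different indices $i$, which is not quite a direct corollary of Lemma \ref{lemma-ind} as stated (because the sets $\{y_{ij}^*\}_j$ vary with $i$) but follows immediately by transcribing its proof.
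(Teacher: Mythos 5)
Your proposal is correct and follows essentially the same route as the paper: apply Lemma \ref{lemma-wojcik} (noting $\mathbb{K}(\mathbb{X},\mathbb{Y})=\mathbb{L}(\mathbb{X},\mathbb{Y})$), use the hypothesis $M_T\cap Ext(B_{\mathbb{X}})\subseteq span\{x_1,\ldots,x_r\}$ together with the fact that $m$-smoothness forces $Ext~J(Tx_i)$ to span $\mathbb{Y}^*$ to get the upper bound $mr$, and establish the lower bound by checking linear independence of $\{y_{ij}^*\otimes x_i\}$ via the argument of Lemma \ref{lemma-ind}. The only (cosmetic) difference is that the paper expands $y^*$ in the single basis $\{y_{1j}^*\}_j$ for the upper bound, while you use the basis attached to each index $i$.
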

\begin{proof}
	For each $1\leq i\leq r,~Tx_i$ is $m-$smooth. Suppose $\{y_{ij}^*:1\leq j\leq m\}$ is a linearly independent subset of $Ext~J(Tx_i)$ for each $1\leq i\leq r.$ We first show that $\dim~span~J(T)\leq mr.$ Since $\mathbb{Y}$ is finite-dimensional, $\mathbb{L}(\mathbb{X},\mathbb{Y})=\mathbb{K}(\mathbb{X},\mathbb{Y}).$ Hence, $T$ is compact operator and $\mathbb{K}(\mathbb{X},\mathbb{Y})$ is trivially an $M-$ideal in $\mathbb{L}(\mathbb{X},\mathbb{Y}).$ Thus, by Lemma \ref{lemma-wojcik},
	\[Ext ~J(T)=\{y^*\otimes x\in \mathbb{K}(\mathbb{X},\mathbb{Y})^*:x\in M_T\cap Ext(B_{\mathbb{X}}), y^*\in Ext ~J(Tx)\}.\]
	  Let $x \in M_T \cap Ext(B_{\mathbb{X}}).$ Then there exist scalars $c_i,i=1,2,...,r$ such that $x=c_1x_1+c_2x_2+...+c_rx_r.$ Now, let $y^* \in Ext~J(Tx).$ Since $\{y_{1j}^*:1\leq j\leq m \}$ is linearly independent, and hence forms a basis of $\mathbb{Y}^*,$ there exist scalars $d_{j}~( 1 \leq j \leq m)$ such that $y^*=\sum_{1\leq j\leq m}d_jy_{1j}^*.$ Thus, 
	\begin{eqnarray*}
		y^* \otimes x 
		&=& y^* \otimes (c_1x_1+c_2x_2+...+c_rx_r)\\
		&=& \Big(\sum_{1\leq j\leq m}d_jy_{1j}^*\Big)\otimes \Big(\sum_{1\leq i\leq r}c_ix_i\Big)\\
		&=& \sum_{1\leq i\leq r,1\leq j\leq m}c_id_{j}y_{1j}^*\otimes x_i\\
		&\in & span \{y_{1j}^*\otimes x_i:1\leq i\leq r,1\leq j\leq m\}.
	\end{eqnarray*}
	Since $x \in M_T \cap Ext(B_{\mathbb{X}})$ and $y^* \in Ext~J(Tx)$ are arbitrary, we have $	Ext~J(T) \subseteq  span \{y_{1j}^*\otimes x_i:1\leq i\leq r,1\leq j\leq m\}.$ Now,
	\begin{eqnarray*}
		&& \dim~span~ Ext ~J(T)\\
		&\leq& \dim~span~\{y_{1j}^*\otimes x_i:1\leq i\leq r,1\leq j\leq m\}\\
		&=& mr,~(\text{by~Lemma ~\ref{lemma-ind}}).
	\end{eqnarray*}
	
	Therefore, $T$ is $k-$smooth, where $k\leq mr.$ Now, $y_{ij}^*\otimes x_i\in Ext~J(T)$ for all $1\leq i\leq r,1\leq j\leq m.$ Using similar arguments as in Lemma \ref{lemma-ind}, it can be shown that $\{y_{ij}^*\otimes x_i:1\leq i\leq r,1\leq j\leq m\}$ is linearly independent subset of $Ext~J(T).$ Thus, $\dim~span~Ext~J(T)\geq mr,$ i.e., $k\geq mr.$ Therefore, $k=mr$ and $T$ is $mr-$smooth. This completes the proof of the theorem.
\end{proof}
To illustrate the usefulness of Theorem \ref{th-result 7} we cite the following example for which $k-$smoothness of the operator can not be obtained using Theorem \ref{th-ind} or \cite[Th. 2.2]{MP} but can be obtained using above theorem.

\begin{example}
	Let $\mathbb{X}=\ell_{\infty}^4$ and $\mathbb{Y}$ be a two-dimensional Banach space such that $B_\mathbb{Y}$ is the convex hull of $\{\pm (2,1),\pm (2,-1)\}.$ Define $T\in \mathbb{L}(\mathbb{X},\mathbb{Y})$ by $T(x,y,z,w)=(y+w,x).$ Then $$M_T\cap Ext(B_\mathbb{X})=\{\pm(1,1,1,1),\pm(1,1,-1,1),\pm(-1,1,1,1),\pm(1,-1,1,-1)\}.$$ Clearly, $\{(1,1,1,1),(1,1,-1,1),(-1,1,1,1),(1,-1,1,-1)\}$ is linearly dependent. Therefore, the order of smoothness of $T$ cannot be obtained from Theorem \ref{th-ind} or \cite[Th. 2.2]{MP}. Observe that,  $M_T\cap Ext(B_\mathbb{X})\subseteq ~span~\{\pm(1,1,-1,1),\pm(-1,1,1,1),$ $ \pm(1,-1,1,-1)\},$ where $\{(1,1,-1,1),(-1,1,1,1),(1,-1,1,-1)\}$ is linearly independent. Moreover, $T(1,1,-1,1),$ $T(-1,1,1,1),T(1,-1,1,-1)$  are $2-$smooth. Therefore, using Theorem \ref{th-result 7} we get, $T$ is $6-$smooth.
\end{example}

We now turn our attention to the study of $k-$smoothness of  operators in the setting of  special Banach spaces.  We characterize the $k-$smoothness of an operator defined from a finite-dimensional Banach space to $\ell_{\infty}^n,~(n\in \mathbb{N}).$ To do so we need  \cite[Cor.2.3]{MP} and the following proposition which gives a nice relation between the order of smoothness of an operator and its adjoint.

\begin{proposition}\label{prop-1}
	Let $\mathbb{X},\mathbb{Y}$ be finite-dimensional Banach spaces. Let $T\in S_{\mathbb{L}(\mathbb{X},\mathbb{Y})}.$ Then $T$ is $k-$smooth if and only if $T^*$ is $k-$smooth.
\end{proposition}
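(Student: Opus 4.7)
The plan is to exploit the fact that, when $\mathbb{X}$ and $\mathbb{Y}$ are finite-dimensional, the adjoint map $\Phi \colon \mathbb{L}(\mathbb{X},\mathbb{Y}) \to \mathbb{L}(\mathbb{Y}^*,\mathbb{X}^*)$, $\Phi(T)=T^*$, is a surjective isometric linear isomorphism. The isometry $\|T^*\|=\|T\|$ is standard, linearity is immediate, and surjectivity follows from reflexivity of finite-dimensional spaces: any $S \in \mathbb{L}(\mathbb{Y}^*,\mathbb{X}^*)$ has adjoint $S^* \colon \mathbb{X}^{**} \to \mathbb{Y}^{**}$, and identifying these biduals with $\mathbb{X}$ and $\mathbb{Y}$ produces an operator in $\mathbb{L}(\mathbb{X},\mathbb{Y})$ whose adjoint is $S$. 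Since $\Phi$ is an isometric isomorphism, its dual map $\Phi^* \colon \mathbb{L}(\mathbb{Y}^*,\mathbb{X}^*)^* \to \mathbb{L}(\mathbb{X},\mathbb{Y})^*$ is also an isometric linear isomorphism.

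Next I would verify that $\Phi^*$ carries $J(T^*)$ bijectively onto $J(T)$. For any $g \in \mathbb{L}(\mathbb{Y}^*,\mathbb{X}^*)^*$, by definition of the dual map one has
\[
\Phi^*(g)(T) \;=\; g(\Phi(T)) \;=\; g(T^*).
\]
Combined with $\|\Phi^*(g)\|=\|g\|$, this says $\Phi^*(g) \in J(T)$ if and only if $\|g\|=1$ and $g(T^*)=1$, i.e., $g \in J(T^*)$. Hence $\Phi^*(J(T^*)) = J(T)$.

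Since $\Phi^*$ is a linear isomorphism, it preserves linear spans and their dimensions, so
\[
\dim \operatorname{span} J(T) \;=\; \dim \operatorname{span} \Phi^*(J(T^*)) \;=\; \dim \operatorname{span} J(T^*).
\]
Therefore $T$ is $k$-smooth if and only if $T^*$ is $k$-smooth, which is the desired equivalence.

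I do not expect any serious obstacle here: the only subtlety is a careful bookkeeping of the canonical identifications $\mathbb{X}^{**}=\mathbb{X}$ and $\mathbb{Y}^{**}=\mathbb{Y}$ (valid in finite dimensions) needed to see that $\Phi$ is surjective and that everything lifts cleanly to duals; once that is in place, the whole argument reduces to the trivial observation that an isometric isomorphism between Banach spaces transports the supporting-functional set $J(\cdot)$ of one unit vector onto that of its image.
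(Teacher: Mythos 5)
Your proof is correct, but it follows a genuinely different route from the paper's. The paper argues at the level of extreme points: using the structure theorem for $Ext~J(T)$ (Lemma 1.1), it shows that every $y^*\otimes x\in Ext~J(T)$ (with $x\in M_T\cap Ext(B_{\mathbb{X}})$, $y^*\in Ext~J(Tx)$) corresponds to an element of $Ext~J(T^*)$ under the tacit identification of $y^*\otimes x$ with $x\otimes y^*$, and concludes $\dim~span~Ext~J(T)=\dim~span~Ext~J(T^*)$. You instead observe that in finite dimensions $T\mapsto T^*$ is a surjective linear isometry $\Phi$ of the whole operator spaces, so its dual $\Phi^*$ is a surjective isometry carrying $J(T^*)$ onto $J(T)$ and hence preserving $\dim~span~J(\cdot)$. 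Your argument is softer and more general --- it is really the statement that $k$-smoothness is invariant under any surjective isometric isomorphism, and it makes explicit the identification that the paper leaves implicit --- while the paper's computation with $Ext~J(T)$ is in the spirit of the rest of the article, where these extreme functionals are the basic working objects. The only points worth stating carefully in your version are (i) surjectivity of $\Phi$, which you correctly reduce to reflexivity of finite-dimensional spaces, and (ii) that $\|\Phi^*(g)\|=\|g\|$ uses that $\Phi$ is onto (an into isometry would only make $\Phi^*$ a quotient map); both are handled, so there is no gap.
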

\begin{proof}
	We first show that $Ext~J(T)=Ext~J(T^*).$ Let $y^*\otimes x\in Ext~J(T).$ Then $x\in M_T\cap Ext(B_\mathbb{X})$ and $y^*\in Ext~J(Tx).$ Now, 
	$$y^*\otimes x(T)=1\Rightarrow y^*(Tx)=1\Rightarrow x(T^*y^*)=1\Rightarrow \|T^*y^*\|=1.$$
	Thus, $y^*\in M_{T^*}$ and $x\in J(T^*y^*).$ Moreover, $x\in  Ext(B_\mathbb{X})$ and $Ext~J(Tx)\subseteq Ext(B_{\mathbb{Y}^*}).$ Therefore, $x\in Ext~J(T^*y^*)$ and $y^*\in M_{T^*}\cap Ext(B_{\mathbb{Y}^*})$ and so $y^*\otimes x\in Ext~J(T^*).$ Hence, $Ext~J(T)\subseteq Ext~J(T^*).$ Now, replacing $T$ by $T^*,$ we get $Ext~J(T^*)\subseteq Ext~J(T).$ Thus, $ExtJ(T)=Ext~J(T^*),$ i.e., $\dim~span~Ext~J(T)=\dim~span~Ext~J(T^*).$ Therefore, $T$ is $k-$smooth if and only if $T^*$ is $k-$smooth. 
\end{proof}

\begin{corollary}
	Let $\mathbb{X}$ be a finite-dimensional Banach space. Let $T\in S_{\mathbb{L}(\mathbb{X},\ell_{\infty}^n)}.$ Then $T$ is $k-$smooth if and only if $M_{T^*}\cap Ext(B_{\ell_{1}^n})=\{\pm e_1,\pm e_2,\ldots \pm e_r\}$ for some $1\leq r\leq n,~T^*e_i$ is $m_i-$smooth for each $1\leq i\leq r$ and $m_1+m_2+\ldots +m_r=k.$ 
\end{corollary}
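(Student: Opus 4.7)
The plan is to obtain this corollary by combining Proposition \ref{prop-1} with the previously established \cite[Cor.~2.3]{MP}, using the isometric identification $(\ell_\infty^n)^* \cong \ell_1^n$. Since both $\mathbb{X}$ and $\ell_\infty^n$ are finite-dimensional, Proposition \ref{prop-1} applies and reduces the problem to characterizing $k$-smoothness of the adjoint $T^* \in S_{\mathbb{L}(\ell_1^n, \mathbb{X}^*)}$: namely, $T$ is $k$-smooth if and only if $T^*$ is $k$-smooth.

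Next I would exploit the well-known structure of $\ell_1^n$: the unit ball has exactly the extreme points $\{\pm e_1, \ldots, \pm e_n\}$, and any subset of these is automatically linearly independent. Because $\mathbb{L}(\ell_1^n, \mathbb{X}^*)$ is finite-dimensional, $T^*$ attains its norm, and since $M_{T^*}$ is symmetric (as $x \in M_{T^*}$ iff $-x \in M_{T^*}$), the intersection $M_{T^*} \cap Ext(B_{\ell_1^n})$ must be of the form $\{\pm e_{j_1}, \ldots, \pm e_{j_r}\}$ for some $1 \le r \le n$; relabeling reduces this to $\{\pm e_1, \ldots, \pm e_r\}$. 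This also verifies the linear-independence hypothesis needed to invoke \cite[Cor.~2.3]{MP}.

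Applying \cite[Cor.~2.3]{MP} to $T^*$, one concludes that $T^*$ is $k$-smooth if and only if $T^*e_i$ is $m_i$-smooth for each $1 \le i \le r$ and $m_1 + m_2 + \cdots + m_r = k$. Chaining this with the reduction to $T^*$ from the first step yields the desired characterization. The main obstacle is ensuring that \cite[Cor.~2.3]{MP} delivers both directions of the iff in the form required here: the sufficiency follows from Theorem \ref{th-ind} applied to $T^*$ (since $\ell_1^n$ is reflexive and $\mathbb{K}(\ell_1^n, \mathbb{X}^*) = \mathbb{L}(\ell_1^n, \mathbb{X}^*)$ by finite-dimensionality), while the necessity depends on the fact, supplied by the cited corollary, that no further extreme points of $B_{\ell_1^n}$ can lie in $M_{T^*}$ without inflating $\dim~span~Ext~J(T^*)$ beyond $m_1 + \cdots + m_r$.
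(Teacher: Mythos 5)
Your proposal is correct and follows essentially the same route as the paper: reduce to the adjoint via Proposition \ref{prop-1} (noting $T^*\in S_{\mathbb{L}(\ell_1^n,\mathbb{X}^*)}$) and then invoke \cite[Cor.~2.3]{MP} for operators out of $\ell_1^n$. The extra observations you add (symmetry of $M_{T^*}$, linear independence of the $e_i$, where each direction of the iff comes from) are harmless elaborations of what the cited corollary already packages.
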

\begin{proof}
	From Proposition \ref{prop-1}, $T$ is $k-$smooth if and only if $T^*$ is $k-$smooth. Now, $T^*\in S_{\mathbb{L}(\ell_1^n,\mathbb{X}^*)}.$ Moreover, from \cite[Cor. 2.3]{MP}, we get that $T^*$ is $k-$smooth if and only if $M_{T^*}\cap Ext(B_{\ell_{1}^n})=\{\pm e_1,\pm e_2,\ldots \pm e_r\}$ for some $1\leq r\leq n,~T^*e_i$ is $m_i-$smooth for each $1\leq i\leq r$ and $m_1+m_2+\ldots +m_r=k.$ This completes the proof of the corollary.	
\end{proof}

 We  next determine the order of smoothness of $T\in \mathbb{L}(\ell_{\infty}^3,\mathbb{Y}),$ where $\mathbb{Y}$ is an arbitrary two-dimensional Banach space, depending on $|M_T\cap Ext(B_{\ell_{\infty}^3})|.$ Observe that if $|M_T\cap Ext(B_{\ell_{\infty}^3})|\leq 6,$ then the order of smoothness of $T$ can be obtained using Theorem \ref{th-ind}. Therefore, we only consider the case for which $|M_T\cap Ext(B_{\ell_{\infty}^3})|=8,$ i.e., $M_T\cap Ext(B_{\ell_{\infty}^3})=\{\pm x_1,\pm x_2,\pm x_3,\pm x_4\},$ where $x_1=(1,1,1),x_2=(-1,1,1),x_3=(-1,-1,1),x_4=(1,-1,1).$ Note that, for each $1\leq i\leq 4,~Tx_i$ is either smooth or $2-$smooth.

\begin{theorem}\label{th-001}
	Let $\mathbb{X}=\ell_{\infty}^3$ and $\mathbb{Y}$ be two-dimensional Banach space. Let $T \in S_{\mathbb{L}(\mathbb{X}, \mathbb{Y})}$ be such that $M_T \cap Ext(B_{\mathbb{X}})= \{ \pm x_1, \pm x_2, \pm x_3, \pm x_4 \}.$ Suppose $S_1=\{x_i:1\leq i\leq 4,~Tx_i~\text{is~}\text{smooth}\}.$ Then the following hold:\\
	\textbf{(I)} Let $|S_1|=4$.\\
	$(a)$ If either $Rank(T)=1$ or for each $i, j (1 \leq i \neq j \leq 4)$ either $Tx_i, Tx_j$ or $Tx_i, -Tx_j$ belong to the same straight line  contained in $S_{\mathbb{Y}},$ then $T$ is $3-$smooth.\\
	$(b)$ Otherwise, $T$ is $4-$smooth.\\
	\textbf{(II)} If $|S_1|=3,$ then  $T$ is $4-$smooth.\\
	\textbf{ (III) } If $|S_1|=2,$ then  $T$ is $5-$smooth.\\
	\textbf{ (IV)} If $|S_1|<2,$ then $S_1=\emptyset$ and $T$ is $6-$smooth.
\end{theorem}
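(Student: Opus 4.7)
The plan is to apply Lemma \ref{lemma-wojcik}. Since $\mathbb{X}=\ell_\infty^3$ and $\mathbb{Y}$ are both finite-dimensional, $\mathbb{K}(\mathbb{X},\mathbb{Y})=\mathbb{L}(\mathbb{X},\mathbb{Y})$ is trivially an $M$-ideal in itself and $dist(T,\mathbb{K}(\mathbb{X},\mathbb{Y}))=0<1$. The lemma then gives
\[
Ext~J(T)=\bigcup_{i=1}^{4}\{y^*\otimes x_i:y^*\in Ext~J(Tx_i)\},
\]
so each smooth $Tx_i$ contributes one extreme functional $y_i^*\otimes x_i$, while each $2$-smooth $Tx_i$ contributes two, $y_{i1}^*\otimes x_i$ and $y_{i2}^*\otimes x_i$, whose first factors form a basis of the $2$-dimensional $\mathbb{Y}^*$. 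The order of smoothness of $T$ is the dimension of the linear span of these functionals inside $\mathbb{L}(\mathbb{X},\mathbb{Y})^*\cong \mathbb{Y}^*\otimes \mathbb{X}$, a $6$-dimensional space, and the analysis becomes a rank computation driven by the affine identity $x_4=x_1-x_2+x_3$, which gives $y^*\otimes x_4=y^*\otimes x_1-y^*\otimes x_2+y^*\otimes x_3$ for every $y^*\in\mathbb{Y}^*$.

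In case $(I)$ I would substitute a hypothetical dependence $\sum c_i(y_i^*\otimes x_i)=0$ into rank-one operators $\psi\otimes y$, reducing to $\sum c_i\psi(x_i)y_i^*=0$ in $\mathbb{Y}^*$ for all $\psi\in\mathbb{X}^*$; testing on the dual basis $e_1^*,e_2^*,e_3^*$ of $\mathbb{X}^*$ collapses the system to $c_1y_1^*=c_3y_3^*=-c_2y_2^*=-c_4y_4^*$, so a nontrivial relation exists iff the four $y_i^*$'s all equal $\pm w$ for a common $w\in S_{\mathbb{Y}^*}$. This uniform proportionality is exactly condition (a): it holds either when $T$ has rank one (so each $Tx_i=\phi(x_i)y_0$ and $y_i^*=\mathrm{sign}(\phi(x_i))y_0^*$) or when every pair $\{Tx_i,Tx_j\}$ or $\{Tx_i,-Tx_j\}$ lies on a common supporting line segment of $S_\mathbb{Y}$ (forcing $y_i^*=\pm y_j^*$). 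When (a) fails, the four extreme functionals are independent and $T$ is $4$-smooth. For cases $(II)$, $(III)$, $(IV)$ I would fix the basis $\{y_{k\ell}^*\otimes x_j:\ell=1,2;\,j=1,2,3\}$ of $\mathbb{Y}^*\otimes \mathbb{X}$ associated to a chosen $2$-smooth $Tx_k$, expand each smooth-point functional $y_i^*$ as $\alpha_iy_{k1}^*+\beta_iy_{k2}^*$, and use $x_4=x_1-x_2+x_3$ to resolve the $x_4$-functionals into this basis. The resulting coordinate matrices give the upper bounds $\leq 4$, $\leq 5$, $\leq 6$ for the rank; matching lower bounds follow from explicit independent sub-collections (via Lemma \ref{lemma-ind} applied to the two functionals attached to a single $2$-smooth $Tx_i$).

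The main obstacle is the geometric step in case $(II)$: the algebraic rank bound only produces a dependence under the additional assumption that $y_1^*,y_2^*,y_3^*$ are pairwise proportional, so this proportionality must be derived from $Tx_4=Tx_1-Tx_2+Tx_3$ together with $\|Tx_i\|=1$ and the $2$-dimensionality of $\mathbb{Y}$. I plan to pass to the invertible coordinate system $\Phi: y\mapsto(y_{41}^*(y),y_{42}^*(y))$ on $\mathbb{Y}$: the image $K=\Phi(B_\mathbb{Y})\subset\mathbb{R}^2$ is a convex body contained in $[-1,1]^2$ having $\Phi(Tx_4)=(1,1)$ as a corner with outward normals $(1,0)$ and $(0,1)$, while the three smooth $P_i=\Phi(Tx_i)\in\partial K$ satisfy $P_1-P_2+P_3=(1,1)$. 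A case analysis on the $\lambda$- and $\mu$-coordinates, using $\lambda_i,\mu_i\in[-1,1]$ together with the four relations between these coordinates, forces the three points onto a single edge through $(1,1)$ or its antipodal edge, so their outward normals coincide up to sign, yielding the required proportionality. The same planar reduction handles the impossibility of $|S_1|=1$ in case $(IV)$, by showing that three extreme (corner) images of $B_\mathbb{Y}$ cannot combine via the relation to produce a smooth fourth image on $\partial K$.
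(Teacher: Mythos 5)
Your framework is right: Lemma \ref{lemma-wojcik} applies trivially here, the problem reduces to computing $\dim\,span$ of the functionals $y^*\otimes x_i$ inside the $6$-dimensional space $\mathbb{L}(\mathbb{X},\mathbb{Y})^*$, and the relation $x_4=x_1-x_2+x_3$ drives everything. Your treatment of case \textbf{(I)} is correct and essentially equivalent to the paper's (testing on rank-one operators against the dual basis of $\ell_1^3$ to get $c_1y_1^*=c_3y_3^*=-c_2y_2^*=-c_4y_4^*$ is, if anything, a cleaner way to see that a dependence exists precisely when the four supporting functionals are pairwise proportional, which is what condition $(a)$ amounts to). You also correctly identify that in cases \textbf{(II)}--\textbf{(IV)} the linear algebra alone is not enough and a geometric proportionality statement must be supplied.

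The gap is that your proposed derivation of that proportionality does not work as described, and it is missing the structural input the argument actually needs. In your planar reduction the constraints you list --- $\lambda_i,\mu_i\in[-1,1]$ together with $\lambda_1-\lambda_2+\lambda_3=1$ and $\mu_1-\mu_2+\mu_3=1$ --- do not force the three points onto a single edge of $K$ or its antipode: $(\lambda_1,\lambda_2,\lambda_3)=(1,0,0)$ already satisfies them, so any ``case analysis on the coordinates'' must invoke the convexity and boundary structure of $K$ in an essential way, and that is precisely the hard part you have not carried out. (The corner hypothesis is also indispensable: for the Euclidean disk, $u=(1,0)$, $v=(0,1)$, $w=(0,-1)$ satisfy $u+v+w=(1,0)$ with non-proportional normals, so nothing follows from the affine relation and the norm conditions alone.) The paper instead gets the proportionality from the fact that $T(B_\mathbb{X})$ is a parallelogram whose vertices are exactly the $2$-smooth images (citing \cite[Lemma 2.11]{MPD} and \cite[Th. 4.1]{KS}), combined with the incidence $L[x_2,-x_1]\cap L[x_3,-x_4]=\{(-1,0,0)\}$ in the cube, which forces $Tx_3$ and $-Tx_4$ onto the edge $L[Tx_2,-Tx_1]$, hence onto a common segment of $S_\mathbb{Y}$ with a single supporting functional. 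None of this appears in your plan. The same omission affects case \textbf{(III)}, where the bound $k\le 5$ is \emph{not} an algebraic consequence of the coordinate matrix: with two $2$-smooth and two smooth images you have six functionals in a six-dimensional space, and ruling out $6$-smoothness requires proving $y_3^*=\pm y_4^*$ (plus the separate subcases $Tx_1=Tx_2$ and $Tx_1\ne Tx_2$, and excluding $Tx_1=-Tx_2$), which again comes from the parallelogram geometry. Case \textbf{(IV)}, including the claim that $|S_1|=1$ is impossible and the rank-one subcase, is likewise only gestured at. So the proposal is sound for \textbf{(I)} but leaves the geometric core of \textbf{(II)}--\textbf{(IV)} unproved.
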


\begin{proof}
	Clearly, $T$ is $k-$smooth for $1 \leq k \leq 6,$ since $\dim(\mathbb{X})=3$ and $\dim(\mathbb{Y})=2.$
	
	\textbf{(I) } Let $|S_1|=4.$ Then $Tx_i$ is smooth for $1 \leq i \leq 4$.
	
	$(a)$    If the given condition is satisfied, then it is clear that there exists $y^* \in S_{\mathbb{Y}^*}$ such that for all $i (1 \leq i \leq 4),$ $J(Tx_i)=\{y^* \}$ or $\{-y^* \}.$ Now, if $T$ is $k-$smooth, then
	\begin{eqnarray*}
		k&=&\dim ~span ~J(T)\\
		&=& \dim~ span~ Ext ~J(T)\\
		&=& \dim~ span ~\{y^*\otimes x_1, y^*\otimes x_2, y^*\otimes x_3, y^*\otimes x_4 \}\\
		&=& \dim~ span ~\{y^*\otimes x_1, y^*\otimes x_2, y^*\otimes x_3 \}\\
		&=& 3,~(\text{by Lemma~\ref{lemma-ind}}).
	\end{eqnarray*}
	Hence, $T$ is $3-$smooth.
	
	$(b)$    Suppose the condition $(a)$ is not satisfied. Thus, there exist $1\leq i, j\leq 4$ such that $Tx_i \neq \pm Tx_j$ and neither $Tx_i, Tx_j$ nor $Tx_i, -Tx_j$ belong to the same straight line contained in $S_{\mathbb{Y}}.$ Without loss of generality, we assume $i=1,~ j=2.$ Let $J(Tx_i)=\{y_i^* \}, (1 \leq i \leq 4).$ Then it is easy to observe that $\{y_1^*, y_2^* \}$ is linearly independent. Let $y_3^*=ay_1^*+by_2^*$ and $y_4^*=cy_1^*+dy_2^*,$ where $a,b,c,d\in \mathbb{R}.$ Since $\|y_3^*\|=1,$ $a$ and $b$ cannot be zero simultaneously. Similarly, $c$ and $d$ cannot be zero simultaneously.  Now, if $T$ is $k-$smooth, then
	\begin{eqnarray*}
		k&=&\dim ~span ~J(T)\\
		&=& \dim~ span~ Ext ~J(T)\\
		&=& \dim~ span ~\{y_1^*\otimes x_1, y_2^*\otimes x_2, y_3^*\otimes x_3, y_4^*\otimes x_4 \}.
	\end{eqnarray*}
	We show that $\{y_1^*\otimes x_1, y_2^*\otimes x_2, y_3^*\otimes x_3, y_4^*\otimes x_4 \}$ is linearly independent. Let $c_i(1\leq i\leq 4)\in \mathbb{R}$ be such that \\
	$$c_1y_1^*\otimes x_1+c_2 y_2^*\otimes x_2+c_3 y_3^*\otimes x_3+c_4 y_4^*\otimes x_4 =0.$$ Then \\
	$c_1y_1^*\otimes x_1+c_2 y_2^*\otimes x_2+c_3 (ay_1^*+by_2^*)\otimes x_3+c_4 (cy_1^*+dy_2^*)\otimes (x_1-x_2+x_3) =0.$\\
	$\Rightarrow (c_1+c_4c)y_1^*\otimes x_1+(c_2-c_4d)y_2^*\otimes x_2+(c_3a+c_4c)y_1^*\otimes x_3+(c_3b+c_4d)y_2^*\otimes x_3-c_4cy_1^*\otimes x_2+c_4dy_2^*\otimes x_1=0.$\\
	Since $\{x_1,x_2,x_3\}$ is linearly independent subset of $\mathbb{X}$ and $\{y_1^*,y_2^*\}$ is linearly independent subset of $\mathbb{Y}^*,$ from Lemma  \ref{lemma-ind}, we get that $\{y_1^*\otimes x_1,y_2^*\otimes x_1,y_1^*\otimes x_2,y_2^*\otimes x_2,y_1^*\otimes x_3,y_2^*\otimes x_3\}$ is linearly independent subset of $\mathbb{L}(\mathbb{X},\mathbb{Y})^*$. Therefore, $$c_1+c_4c=0,~c_2-c_4d=0,~c_3a+c_4c=0,~c_3b+c_4d=0,~c_4c=0,~c_4d=0.$$
	Now, solving these equations, we obtain $c_i=0$ for all $1\leq i\leq 4.$ Hence, $\{y_1^*\otimes x_1, y_2^*\otimes x_2, y_3^*\otimes x_3, y_4^*\otimes x_4 \}$ is linearly independent. Thus, $k=4$ and $T$ is $4-$smooth. \\

	\textbf{(II)}  Without loss of generality, assume that $S_1=\{x_2,x_3,x_4\},$ i.e., $Tx_2,Tx_3,Tx_4$ are smooth points of $S_\mathbb{Y}$ and $Tx_1$ is $2-$smooth point of $S_\mathbb{Y}.$ Clearly, $Rank(T)=2.$ Now, by \cite[Lemma 2.11]{MPD}, $T(B_\mathbb{X})$ is a polygon with $4$ extreme points. Since $Tx_1$ is a $2-$smooth point of $S_{\mathbb{Y}},$ by \cite[Th. 4.1]{KS}, $\pm Tx_1$ must be  two extreme points of $B_\mathbb{Y}.$ Since $\|T\|=1, ~T(B_\mathbb{X})\subseteq B_\mathbb{Y},$ i.e., $Ext(B_\mathbb{Y})\cap T(B_\mathbb{X})\subseteq Ext(T(B_\mathbb{X})).$ Therefore, $\pm Tx_1\in Ext(T(B_\mathbb{X})).$ Suppose that the other two extreme points of the polygon $T(B_\mathbb{X})$ are $\pm Tx_2.$ Then $L[Tx_2,-Tx_1]$ is an edge  of the polygon $T(B_\mathbb{X}).$ Now, $L[x_2,-x_1]\cap L[x_3,-x_4]=\{(-1,0,0)\}.$ Therefore, $T(-1,0,0)\in L[Tx_2,-Tx_1]\cap L[Tx_3,-Tx_4].$ This implies that $Tx_3,-Tx_4\in L[Tx_2,-Tx_1].$ Since $Tx_3,Tx_4$ are smooth points and $Tx_1$ is $2-$smooth point, $Tx_3\neq -Tx_1$ and $-Tx_4\neq -Tx_1.$ Now, $Rank(T)=2$ implies that either $Tx_3\neq Tx_2$ or $-Tx_4\neq Tx_2.$ Without loss of generality, let us assume that $Tx_3\neq Tx_2.$ Then $Tx_3\in L(Tx_2,-Tx_1).$ Since $\|Tx_3\|=1,$  $L[Tx_2,-Tx_1]\subseteq S_\mathbb{Y}.$ Now, let $J(Tx_3)=\{y^*\}.$ Then it is easy to see that $J(Tx_2)=\{y^*\}$ and $J(Tx_4)=\{-y^*\}.$ Since $Tx_1$ is $2-$smooth, it is clear that  $Ext~J(Tx_1)=\{y_1^*, y_2^* \},$ where $\{y_1^*,y_2^*\}$ is a linearly independent subset of $\mathbb{Y}^*.$  Let $y^*=ay_1^*+by_2^*.$ Now, if $T$ is $k-$smooth, then 
	\begin{eqnarray*}
		k&=&\dim ~span ~J(T)\\
		&=& \dim~ span~ Ext ~J(T)\\
		&=& \dim~ span ~\{y_1^*\otimes x_1, y_2^*\otimes x_1, y^*\otimes x_2, y^*\otimes x_3, y^*\otimes x_4\}\\
		&=& \dim~ span ~\{y_1^*\otimes x_1, y_2^*\otimes x_1, y^*\otimes x_2, y^*\otimes x_3 \}.
	\end{eqnarray*}
	Now, using Lemma \ref{lemma-ind} it can be observed that $\{y_1^*\otimes x_1, y_2^*\otimes x_1, y^*\otimes x_2, y^*\otimes x_3 \}$ is a linearly independent subset of $\mathbb{L}(\mathbb{X},\mathbb{Y})^*.$  Hence, $k=4$ and $T$ is $4-$smooth.\\
	
	\textbf{(III) }  Without loss of generality, we assume $S_1=\{x_3,x_4\}.$ Then $\pm Tx_3, \pm Tx_4$ are smooth points of $S_\mathbb{Y}$ and $\pm Tx_1, \pm Tx_2$ are $2-$smooth points of $S_\mathbb{Y}.$ Clearly, $Rank(T)=2.$ By \cite[Th. 4.1]{KS}, $\pm Tx_1,\pm Tx_2\in Ext(B_\mathbb{Y}).$ Since $\|T\|=1,T(B_\mathbb{X})\subseteq B_\mathbb{Y}.$ This gives that $Ext(B_\mathbb{Y})\cap T(B_\mathbb{X})\subseteq Ext(T(B_\mathbb{X})).$ Thus, we get, $\pm Tx_1,\pm Tx_2\in Ext(T(B_\mathbb{X})).$ If possible, suppose that $Tx_1=-Tx_2.$ Then $x_4=x_1-x_2+x_3$ implies that $Tx_1=\frac{Tx_4-Tx_3}{2}.$ Since $Tx_1\in Ext(B_\mathbb{Y}),$ we must have $Tx_1=Tx_4=-Tx_3.$ Thus, $Rank(T)=1,$ a contradiction. Therefore, $Tx_1\neq -Tx_2.$ First assume that $Tx_1=Tx_2.$ Then from $x_4=x_1-x_2+x_3,$ we get $Tx_3=Tx_4.$ Let $J(Tx_3)=J(Tx_4)=\{y^*\}$ and $Ext~J(Tx_1)=Ext~J(Tx_2)=\{y_1^*,y_2^*\}.$ Then it is easy to see that $T$ is $5-$smooth. Now, assume that $Tx_1\neq Tx_2.$ Then  $\pm Tx_1,\pm Tx_2$ are $4$ distinct extreme points of the polygon $T(B_\mathbb{X})$ and $L[Tx_2,-Tx_1]$ is an edge of $T(B_\mathbb{X}).$  Now, as in \textbf{(II)}, it can be shown that $Tx_3,-Tx_4\in L[Tx_2,-Tx_1].$ Since $Tx_3,-Tx_4$ are smooth points, $Tx_3,-Tx_4\in L(Tx_2,-Tx_1).$ From $\|Tx_3\|=1,$ we can show that $L[Tx_2,-Tx_1]\subseteq S_\mathbb{Y}.$  Let $J(Tx_4)=\{y^*\}.$ Then $J(Tx_3)=\{-y^*\}.$ Let $Ext~J(Tx_1)=\{y_1^*, y_2^* \}$ and $Ext~J(Tx_2)=\{y_3^*, y_4^* \}.$ Clearly, $\{y_1^*, y_2^* \}$ and $\{y_3^*, y_4^* \}$ are linearly independent. Now, if $T$ is $k-$smooth, then
	\begin{eqnarray*}
		k&=&\dim ~span ~J(T)\\
		&=& \dim~ span~ Ext ~J(T)\\
		&=& \dim~ span ~\{y_1^*\otimes x_1, y_2^*\otimes x_1, y_3^*\otimes x_2, y_4^*\otimes x_2, y^*\otimes x_3, y^*\otimes x_4\}\\
		&=& \dim~ span ~\{y_1^*\otimes x_1, y_2^*\otimes x_1, y_3^*\otimes x_2, y_4^*\otimes x_2, y^*\otimes x_3 \}\\
		&=& \dim~ span ~\{y_1^*\otimes x_1, y_2^*\otimes x_1, y_1^*\otimes x_2, y_2^*\otimes x_2, y^*\otimes x_3 \}\\
		&=& 5, \text{by simple calculation}.
	\end{eqnarray*}
	Hence, $T$ is $5-$smooth.\\
	
	\textbf{ (IV) } Since $|S_1|<2,$ at least $3$ points of $Tx_i,~1\leq i\leq 4$ are $2-$smooth. Without loss of generality, suppose that $Tx_1,Tx_2,Tx_3$ are $2-$smooth. If $Rank(T)=1,$ then it is easy to see that $Tx_4$ is $2-$smooth. Suppose $Rank(T)=2.$ Then by \cite[Lemma 2.11]{MPD}, $T(B_\mathbb{X})$ is a polygon with $4$ extreme points. First let $Ext~(T(B_\mathbb{X}))=\{\pm Tx_1,\pm Tx_2\}.$ Then using similar arguments as in (\textbf{III}) we can show that $Tx_3,$ $-Tx_4 \in L[Tx_2,-Tx_1].$ Since $Tx_3$ is $2-$smooth, we must have either $Tx_3=Tx_2$ or $Tx_3=-Tx_1.$ If $Tx_3=Tx_2,$ then from $x_4=x_1-x_2+x_3,$ we get $Tx_4=Tx_1.$ If $Tx_3=-Tx_1,$ then similarly, we get $Tx_4=-Tx_2.$ In each case, $Tx_4$ is $2-$smooth. Similarly, considering other cases, we can conclude that if $|S_1|<2,$ then $Tx_i$ are $2-$smooth for all $1\leq i\leq 4,$ i.e., $S_1=\emptyset.$ Using Theorem \ref{th-result 7}, we can now say that $T$ is $6-$smooth. This completes the proof of the theorem.
\end{proof}

In Theorem \ref{th-001}, if we further assume that $\mathbb{Y}$ is a two-dimensional strictly convex, smooth Banach space, then we obtain the following corollary.

\begin{corollary}\label{cor-sc}
	Let $\mathbb{X}=\ell_{\infty}^3$ and $\mathbb{Y}$ be a two-dimensional strictly convex, smooth Banach space. Let $T \in S_{\mathbb{L}(\mathbb{X}, \mathbb{Y})}$ and $M_T \cap Ext(B_{\mathbb{X}})= \{ \pm x_1, \pm x_2, \pm x_3, \pm x_4 \}.$ Then the following hold:\\
	$(i)$ If $Rank(T)=1$ then $T$ is $3-$smooth.\\
	$(ii)$ If $Rank(T)=2,$ then $T$ is $4-$smooth.
\end{corollary}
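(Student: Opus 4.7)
The corollary is going to be an easy consequence of Theorem \ref{th-001}, so the main task is to identify which branch of that theorem each case of the corollary falls into.

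First I would observe that since $\mathbb{Y}$ is smooth, every point of $S_{\mathbb{Y}}$ is a smooth point; in particular, each $Tx_i$ ($1 \le i \le 4$) is a smooth point of $S_{\mathbb{Y}}$. In the notation of Theorem \ref{th-001}, this means $S_1 = \{x_1, x_2, x_3, x_4\}$, so $|S_1|=4$ and we are in case \textbf{(I)}. The proof therefore reduces to deciding whether condition $(a)$ or condition $(b)$ of \textbf{(I)} applies, depending on the rank of $T$.

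For part $(i)$, the assumption $\mathrm{Rank}(T)=1$ immediately places us in condition $(a)$ of \textbf{(I)}, since that condition is satisfied as soon as $\mathrm{Rank}(T)=1$. Theorem \ref{th-001}\,\textbf{(I)}$(a)$ then yields that $T$ is $3$-smooth. For part $(ii)$, I would use strict convexity to rule out condition $(a)$. In a strictly convex two-dimensional space, $S_{\mathbb{Y}}$ contains no non-trivial line segment, so any straight line contained in $S_{\mathbb{Y}}$ is a single point. Hence for a pair $i \neq j$, the phrase ``$Tx_i, Tx_j$ (or $Tx_i, -Tx_j$) belong to the same straight line contained in $S_{\mathbb{Y}}$'' forces $Tx_i = Tx_j$ or $Tx_i = -Tx_j$, i.e.\ $Tx_i \in \{\pm Tx_j\}$. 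If this were to hold for every pair $i \neq j$, then all four vectors $Tx_1, \ldots, Tx_4$ would lie on a single line through the origin, contradicting $\mathrm{Rank}(T)=2$. Thus condition $(a)$ fails, condition $(b)$ applies, and Theorem \ref{th-001}\,\textbf{(I)}$(b)$ gives that $T$ is $4$-smooth.

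I do not expect any serious obstacle here; the only point requiring a touch of care is the verification that, under $\mathrm{Rank}(T)=2$ and strict convexity, condition $(a)$ of \textbf{(I)} must fail. The argument above, which rules out the degenerate possibility that $\{\pm Tx_1, \pm Tx_2, \pm Tx_3, \pm Tx_4\}$ lies on one line through the origin, handles this cleanly.
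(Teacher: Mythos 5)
Your proposal is correct and follows essentially the same route as the paper: observe that smoothness of $\mathbb{Y}$ forces $|S_1|=4$, then invoke case \textbf{(I)} of Theorem \ref{th-001}, using strict convexity to decide between subcases $(a)$ and $(b)$. The paper's proof is terser (it simply notes that $S_{\mathbb{Y}}$ contains no non-trivial segment and that each $Tx_i$ is smooth, then cites case \textbf{(I)}), whereas you spell out why condition $(a)$ must fail when $\mathrm{Rank}(T)=2$ — a worthwhile but routine verification.
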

\begin{proof}
	Observe that, since $\mathbb{Y}$ is strictly convex, $S_\mathbb{Y}$ does not contain non-trivial straight line segment. Now, since $\mathbb{Y}$ is smooth, $Tx_i$ is smooth for all $1\leq i\leq 4.$ Thus, the corollary follows from case \textbf{(I)} of Theorem \ref{th-001}. 
\end{proof}

As an immediate application of Theorem \ref{th-001}, we can characterize the extreme contractions defined from $\ell_{\infty}^3$ to arbitrary two-dimensional polygonal Banach space.

\begin{theorem}
	Let $\mathbb{X}=\ell_{\infty}^3$ and $\mathbb{Y}$ be a two-dimensional polygonal Banach space. Let $T\in S_{\mathbb{L}(\mathbb{X},\mathbb{Y})}.$ Then $T$ is an extreme contraction if and only if $|M_T\cap Ext(B_\mathbb{X})|\geq 6$ and $T( M_T\cap Ext(B_\mathbb{X}))\subseteq Ext(B_\mathbb{Y}).$
\end{theorem}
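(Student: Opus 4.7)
The key observation is that $\dim\mathbb{L}(\ell_\infty^3,\mathbb{Y})=6$, and in any finite-dimensional normed space a unit vector is an extreme point of the unit ball iff its supporting functionals span the entire dual; consequently $T\in S_{\mathbb{L}(\mathbb{X},\mathbb{Y})}$ is an extreme contraction precisely when $T$ is $6$-smooth. The plan is therefore to establish the equivalence between $6$-smoothness of $T$ and the two stated conditions by a case analysis on the (necessarily even) cardinality $|M_T\cap Ext(B_\mathbb{X})|\in\{2,4,6,8\}$. An auxiliary fact I will use repeatedly, valid because $\mathbb{Y}$ is two-dimensional and polygonal, is that $y\in S_\mathbb{Y}$ is $2$-smooth exactly when $y\in Ext(B_\mathbb{Y})$, while $y$ lying in the relative interior of an edge of $B_\mathbb{Y}$ is smooth.

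If $|M_T\cap Ext(B_\mathbb{X})|\in\{2,4\}$, then Theorem \ref{th-ind} yields that $T$ is at most $4$-smooth (each summand $m_i$ is bounded by $\dim\mathbb{Y}=2$), so $T$ cannot be extreme; this establishes the necessity of $|M_T\cap Ext(B_\mathbb{X})|\geq 6$. For $|M_T\cap Ext(B_\mathbb{X})|=6$, I write $M_T\cap Ext(B_\mathbb{X})=\{\pm x_1,\pm x_2,\pm x_3\}$ with representatives chosen among the eight vertices $(\pm1,\pm1,\pm1)$ of $B_{\ell_\infty^3}$. A routine determinant check shows that any three such vertices (one from each antipodal pair) are linearly independent, so Theorem \ref{th-ind} applies and gives that $T$ is $(m_1+m_2+m_3)$-smooth with each $m_i\in\{1,2\}$. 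Hence $T$ is $6$-smooth iff $m_i=2$ for all $i$, which by the auxiliary fact is precisely $Tx_i\in Ext(B_\mathbb{Y})$ for $i=1,2,3$.

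The remaining case $|M_T\cap Ext(B_\mathbb{X})|=8$ is already classified by Theorem \ref{th-001}: $T$ is $6$-smooth iff case \textbf{(IV)} holds, i.e., $S_1=\emptyset$, which by the auxiliary fact translates to $Tx_i\in Ext(B_\mathbb{Y})$ for all $i=1,2,3,4$, i.e., $T(M_T\cap Ext(B_\mathbb{X}))\subseteq Ext(B_\mathbb{Y})$. Assembling the three cases yields both directions of the equivalence. The main obstacle here is not technical but organizational: correctly matching the $k$-smoothness values produced by Theorems \ref{th-ind} and \ref{th-001} against the target dimension $6$, and cleanly translating ``all $Tx_i$ are $2$-smooth'' into the polygonal condition $Tx_i\in Ext(B_\mathbb{Y})$, which is precisely what the polygonal hypothesis on $\mathbb{Y}$ enables.
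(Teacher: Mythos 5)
Your overall architecture coincides with the paper's: reduce extremality of $T$ to $6$-smoothness of $T$, then settle the case $|M_T\cap Ext(B_\mathbb{X})|=6$ via Theorem \ref{th-ind} and the case $|M_T\cap Ext(B_\mathbb{X})|=8$ via Theorem \ref{th-001}, using the fact that in a two-dimensional polygonal space the $2$-smooth points of the sphere are exactly the extreme points of the ball. Those parts are correct, and your way of excluding $|M_T\cap Ext(B_\mathbb{X})|\leq 4$ (bounding the order of smoothness by $4$ through Theorem \ref{th-ind}) is a legitimate, slightly more self-contained alternative to the paper's appeal to \cite[Th. 2.2]{SRP}.

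The gap is in your opening reduction. You justify ``$T$ is an extreme contraction iff $T$ is $6$-smooth'' by the claim that in \emph{any} finite-dimensional normed space a unit vector is extreme iff its supporting functionals span the whole dual. Only one implication of that claim is true in general: if $J(x)$ spans $\mathbb{X}^*$ then $x$ is extreme (a decomposition $x=\frac{1}{2}(y+z)$ with $y\neq z$ forces $J(x)$ into the proper subspace annihilating $y-z$). The converse is false --- in $\ell_2^n$ every unit vector is extreme but only $1$-smooth --- and it is precisely the converse that your necessity argument uses: from ``$T$ is at most $4$- (or $5$-) smooth'' you conclude ``$T$ is not extreme.'' So, as written, the forward direction of the theorem is unproved. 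The equivalence you need does hold here, but for a nontrivial reason: either quote \cite[Th. 2.2]{MPD}, which is what the paper does, or observe that $B_{\mathbb{L}(\ell_\infty^3,\mathbb{Y})}$ is a polytope (the operator norm is the maximum of $\|Tx\|_{\mathbb{Y}}$ over the finitely many vertices $x$ of $B_{\ell_\infty^3}$, and $\|\cdot\|_{\mathbb{Y}}$ is polyhedral) and that in a polyhedral space the extreme points of the ball are exactly the points of maximal order of smoothness. One of these justifications must be supplied; with it, the rest of your argument goes through. A very minor further point: you should note that $M_T\cap Ext(B_\mathbb{X})\neq\emptyset$ (e.g.\ by Lemma \ref{lemma-wojcik}), so that the cardinality indeed lies in $\{2,4,6,8\}$.
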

\begin{proof}
	First let $T$ be an extreme contraction. Then by \cite[Th. 2.2]{MPD}, $T$ is $6-$smooth. From \cite[Th. 2.2]{SRP},  we get $span(M_T\cap Ext(B_\mathbb{X}))=\mathbb{X},$ i.e., $|M_T\cap Ext(B_\mathbb{X})|\geq 6.$ Let $|M_T\cap Ext(B_\mathbb{X})|= 6.$ Then $M_T\cap Ext(B_\mathbb{X})$ is of the form $\{\pm x_1,\pm x_2,\pm x_3\},$ where $\{x_1,x_2,x_3\}$ is linearly independent. Now, from Theorem \ref{th-ind} it is clear that $Tx_i$ is $2-$smooth for each $1\leq i\leq 3.$ Therefore, by \cite[Th. 4.1]{KS}, $Tx_i\in Ext(B_\mathbb{Y})$ for all $1\leq i\leq 3.$ Now, suppose that $|M_T\cap Ext(B_\mathbb{X})|= 8.$ Then from Theorem \ref{th-001}, we can conclude that for all $x\in M_T\cap Ext(B_\mathbb{X}),$ $Tx$ is $2-$smooth, i.e., $Tx\in Ext(B_\mathbb{Y}).$ \\
	Conversely, suppose that $|M_T\cap Ext(B_\mathbb{X})|\geq 6$ and  $T( M_T\cap Ext(B_\mathbb{X}))\subseteq Ext(B_\mathbb{Y}).$ If $|M_T\cap Ext(B_\mathbb{X})|= 6,$ then from Theorem \ref{th-ind}, we get, $T$ is $6-$smooth. Hence, by \cite[Th. 2.2]{MPD}, $T$ is an extreme contraction. If $|M_T\cap Ext(B_\mathbb{X})|=8,$ then from Theorem \ref{th-001}, we get $T$ is $6-$smooth. Thus, again by \cite[Th. 2.2]{MPD}, $T$ is an extreme contraction. This completes the proof of the theorem.
\end{proof}

We end this article with the following question:
\begin{question}
Suppose $\mathbb{X} $ and $\mathbb{Y}$ are Banach spaces and $ T \in S_{L(\mathbb{X}, \mathbb{Y})} ,$ then what are the necessary and sufficient conditions for $T$ to be multi-smooth point of finite order?
One can consider the case $\mathbb{X} = \ell_{\infty}^n, \mathbb{Y} = \ell_1^{n}, (n \geq 3).$ There are many more cases where the question is still unanswered.
\end{question}

\bibliographystyle{amsplain}

\begin{thebibliography}{99}
	
		\bibitem{B} G. Birkhoff,  \textit{Orthogonality in linear metric spaces}, Duke Math. J., \textbf{1} (1935) 169-172.
		
		\bibitem{DMS} K. C. Das, S. Majumdar and B. Sims, \textit{Restricted numerical range and weak convergence on the boundary of the numerical range}, J. Math. Phy. Sci., \textbf{21} (1987) no. 1, 35-42. 
	
	
	\bibitem{H} A. S. Hamarsheh, \textit{$k-$smooth points in some Banach spaces},  International Journal of Mathematics and Mathematical Sciences, Vol. 2015, Article ID 394282, 4 pages.
	
	\bibitem{Ha} A. S. Hamarsheh, \textit{Multismoothness in $L^1(\mu,X)$},  International Mathematical Forum, \textbf{9} (2014) no. 33, 1621-1624.

	
		\bibitem{HWW} P. Harmand, D. Werner and W. Werner,  \textit{M-Ideals in Banach Spaces and Banach Algebras}, Lecture Notes in Mathematics, \textbf{1547} (1993), Springer-Verlag, Berlin, viii+387 pp.
		
		\bibitem{J}  R. C. James,  \textit{Orthogonality and linear functionals in normed linear spaces}, Trans. Amer. Math. Soc., \textbf{61} (1947) 265-292.
	
	
	\bibitem{KS} R. Khalil and A. Saleh, \textit{Multi-smooth points of finite order}, Missouri J. Math. Sci., \textbf{17} (2005) 76-87.
	

	
	\bibitem{LR}  B. -L. Lin and T. S. S. R. K. Rao,  \textit{Multismoothness in Banach spaces}, Int. J. Math. Math. Sci., Vol. 2007, Article ID 52382, 12 pages, 2007.
	
		\bibitem{MP} A. Mal and K. Paul, \textit{Characterization of $k-$smooth operators between Banach spaces}, Linear Algebra Appl., \textbf{586} (2020) 296-307.
	
	
	\bibitem{MPD} A. Mal, K. Paul and S. Dey, \textit{Characterization of extreme contractions through $k-$smoothness of operators}, arXiv:2006.15318v1 [math.FA] 27 Jun 2020.
	
	
	
	\bibitem{MPRS} A. Mal, K. Paul, T. S. S. R. K. Rao and D. Sain, \textit{Approximate Birkhoff–James orthogonality and smoothness in the space of bounded linear operators}, Monatsh Math, \textbf{190} (2019) 549-558. DOI: 10.1007/s00605-019-01289-3.
	
	\bibitem{PSG} K. Paul, D. Sain and P. Ghosh, \textit{  Birkhoff-James orthogonality and smoothness of bounded linear operators},  Linear Algebra Appl., \textbf{506} (2016) 551-563.
	
	
	
	
	\bibitem{SP} D. Sain and K. Paul, \textit{Operator norm attainment and inner product spaces}, Linear Algebra Appl., \textbf{439} (2013) 2448-2452.
	
	

		\bibitem{SRP} D. Sain, A. Ray and K. Paul, \textit{Extreme contractions on finite-dimensional polygonal Banach spaces}, J. Convex Anal., \textbf{26} (2019) no. 3, 877-885.
		
			\bibitem{S} I. Singer, \textit{Best approximation in normed linear spaces by elements of linear subspaces}, Grundlehren Math. Wiss., vol.171, Springer-Verlag, Berlin, Heidelberg, New York, 1970.
		
	

\bibitem{Wa} P. W\'ojcik, \textit{$k$-smoothness: an answer to an open problem}, Math. Scand., 123 (1) (2018) 85-90.


	\bibitem{W}  P.  W\'{o}jcik,  \textit{ Birkhoff Orthogonality in classical M-ideals}, J. Aust. Math. Soc., \textbf{103} (2017) no. 2, 279-288.	
	

\end{thebibliography}

\end{document}